\newtheorem{theorem}{Theorem}
\newtheorem{remark}[theorem]{Remark}
\newtheorem{lemma}[theorem]{Lemma}
\newtheorem{proposition}[theorem]{Proposition}
\newtheorem{corollary}[theorem]{Corollary}
\newtheorem{example}[theorem]{Example}
\def\rho{\varrho}
\newcommand{\abs}[1]{\left\vert#1\right\vert}
\newcommand{\norm}[1]{\left\Vert#1\right\Vert}
\newcommand{\setu}{{\mathfrak{u}}}
\newcommand{\setv}{{\mathfrak{v}}}
\newcommand{\setU}{{\mathcal{U}}}
\newcommand{\QMC}{{\mathrm{QMC}}}
\newcommand{\rd}{{\mathrm{\,d}}}
\newcommand{\calF}{{\mathcal{F}}}
\newcommand{\calP}{{\mathcal{P}}}
\newcommand{\bsx}{{\boldsymbol{x}}}
\newcommand{\bst}{{\boldsymbol{t}}}
\newcommand{\bssigma}{{\boldsymbol{\sigma}}}
\newcommand{\bszero}{{\boldsymbol{0}}}
\newcommand{\bsone}{{\boldsymbol{1}}}
\newcommand{\bsgamma}{{\boldsymbol{\gamma}}}
\newcommand{\bspitch}{{\boldsymbol{\,\pitchfork}}}
\definecolor{darkred}{RGB}{139,0,0}
\definecolor{darkgreen}{RGB}{0,100,0}
\definecolor{darkmagenta}{RGB}{139,0,139}
\definecolor{darkorange}{RGB}{190,70,20}
\def\cbdelete[#1]{}
\newcommand{\bbR}{{\mathbb{R}}}
\newcommand{\NN}{{\mathbb{N}}}
\newcommand{\mask}[1]{}
\title{On Quasi-Monte Carlo Methods\\ in Weighted ANOVA Spaces}
\author{P. Kritzer\thanks{P. Kritzer is supported by the Austrian
Science Fund (FWF): Project F5506-N26, which is a part of the Special Research Program
"Quasi-Monte Carlo Methods: Theory and Applications".}, F. Pillichshammer\thanks{F. Pillichshammer is supported by the Austrian Science Fund (FWF): 
Project F5509-N26, which is a part of the Special Research Program 
"Quasi-Monte Carlo Methods: Theory and Applications".}, and G. W. Wasilkowski}
\date{\today}
\begin{document}
\maketitle

\begin{abstract}
\noindent In the present paper we study quasi-Monte Carlo
rules for approximating integrals over
the $d$-dimensional unit cube for functions from weighted Sobolev spaces
of regularity one. While the properties of these rules are well understood
for anchored Sobolev spaces, this is not 
the case for the ANOVA
spaces, which are another very important type of reference spaces
for quasi-Monte Carlo rules.

Using a direct approach we provide a formula for the worst case error
of quasi-Monte Carlo rules for functions from weighted ANOVA spaces.
As a consequence we 
bound the worst case error from above in terms of weighted discrepancy of the employed integration nodes. 
On the other hand we also obtain a general lower bound in terms of the number $n$ of used integration nodes.

For the one-dimensional case our results lead to the optimal
integration rule 
and also in the two-dimensional case we 
provide rules yielding optimal convergence rates.
\end{abstract}

\centerline{\begin{minipage}[hc]{130mm}{
{\em Keywords:} Quasi-Monte Carlo integration, ANOVA space, worst case error, weighted discrepancy\\
{\em MSC 2010:} 65D30, 65C05, 11K38}
\end{minipage}}

\section{Introduction}

In this paper we study numerical integration of functions $f$ over the $d$-dimensional unit cube $[0,1]^d$. 
A powerful method is the quasi-Monte Carlo (QMC) method which approximates the integral by 
$$
\int_{[0,1]^d} f(\bsx) \rd \bsx \approx \frac{1}{n}\sum_{j=1}^n f(\bst_j),
$$ 
where the $\bst_j$ are given points in $[0,1]^d$. The latter expression is called an ($n$-point) QMC rule and is denoted by ${\rm QMC}_{d,n}$, 
with $d$ indicating the dimension. We call the set $\calP=\{\bst_1,\ldots,\bst_n\}$ the underlying node set of the QMC rule.  General introductions to QMC methods can, e.g., be found in \cite{DP2010,LP2014,niesiam}.

The basis of the QMC method is the fact that the absolute error of a
QMC rule
\[
\left|\int_{[0,1]^d} f(\bsx) \rd \bsx - {\rm QMC}_{d,n}(f)\right|
\]
can be separated into properties of the integrand $f$ on the one hand, and distribution properties of the node set underlying the QMC rule on the other hand. 
Such estimates are called Koksma-Hlawka type inequalities, which go back to Koksma~\cite{K1942} (for $d=1$) and Hlawka~\cite{H1961} (for arbitrary $d \in \mathbb{N}$). 
In classical cases the absolute error is bounded by the product of the variation of the integrand $f$ in the sense of Hardy and Krause, and the star-discrepancy of the node set $\calP$. 
Until today, several variants of these classical results for various function classes have been developed, see, e.g., \cite{AD2015,BCGT2013,H1998,NW2010,PS2015,SW1998} to mention just a few 
references.

Nowadays it is very convenient to introduce  Koksma-Hlawka type
inequalities as equalities for the worst case integration errors
of QMC rules for functions from Banach spaces
$(\calF,\|\cdot\|_{\calF})$ which are defined as
\[
{\rm error}({\rm QMC}_{d,n};\calF)=
\sup_{f \in \calF \atop \|f\|_{\calF} \le 1}
\left|\int_{[0,1]^d} f(\bsx) \rd \bsx - {\rm QMC}_{d,n}(f)\right|.
\]
For example, the classical Koksma-Hlawka inequality can then be stated
as
\[{\rm error}({\rm QMC}_{d,n};\calF)=L_{p^\ast}(\calP),
\]
where $L_{p^\ast}(\calP)$ is the so-called $L_{p^\ast}$-discrepancy of the node set $\calP$ (see Section \ref{def:disc} below for a precise definition),
when considering  the norm 
\[
\|f\|_{\calF}=\left(\int_{[0,1]^d} \left|\frac{\partial^d f}{\partial \bsx}
(\bsx)\right|^p \rd \bsx\right)^{1/p} 
\]
for the space of functions anchored at $\bszero$,
see \cite[Sec.~3.1.5]{NW2008}, where $p, p^\ast \in [1,\infty]$ are 
conjugate, i.e., $1/p+1/p^\ast=1$. For further information we also
refer to \cite{DP2010,NW2010}.

The most prominent example of a function space in this context is the anchored Sobolev space of regularity one, and, even more general, the $\bsgamma$-weighted anchored Sobolev spaces of regularity one. The exact definitions of these spaces will be given in Section~\ref{def:anchspace}. For these Sobolev spaces the Koksma-Hlawka theory is very well understood: the worst case error is exactly the weighted $L_{p^\ast}$-discrepancy.

Related and also often considered as reference spaces for QMC rules
are the ($\bsgamma$-weighted) ANOVA spaces which will be formally
introduced in Section~\ref{def:ANOVAsp};
see, e.g., \cite{DKS2013,DP2010,HeRiWa2016,HKPW2019,NW2008,SW2002}. 
While for special choices of weights the anchored space and the ANOVA
space can be related in terms of embeddings
(see, e.g., \cite{GHHRW2017,HR2015,HeRiWa2016,HS2016,KPW2017})
and therefore the error
analysis for these two spaces is (up to embedding constants)
equivalent, this is not possible for general weights. 

In the present paper we provide a direct approach for the error analysis
of QMC rules in $\bsgamma$-weighted ANOVA spaces. It is an advantage that
this approach will work for general choices of non-negative weights
without any restriction (cf. Remark~\ref{re:altappr} in Section~\ref{sec:QMC}).

A further advantage appears when we restrict ourselves to the 1D case
(i.e., $d=1$).
Recall that the optimality of the composite midpoint rule for the $L_{p^*}$-discrepancy has been
  known for quite some time but only for $p^*\in\{2,\infty\}$, see
  \cite{nie1973}, and was very recently extended to arbitrary
  $p^*\in[1,\infty]$ in \cite{KPa19}. Using our results for the 1D case,
  we are  able
  to provide an elementary proof of the optimality of the composite
  midpoint rule for arbitrary $p^*$ 
for integration in the ANOVA space  
(cf. Theorem~\ref{lem:midp} in Section~\ref{sec:1D}). 

The paper is organized as follows. In Section~\ref{sec:basics} we
introduce the basic notation, the weighted ANOVA and anchored spaces
of regularity one, and several notions of discrepancy.
Section~\ref{sec:QMC} is devoted to the error analysis of QMC rules
in weighted ANOVA spaces. The main results show how the respective
worst case errors can be related to the weighted discrepancy of the
underlying node sets (cf. Corollary~\ref{cor1}).
Furthermore, we provide a general lower bound on the worst case error.
In Section~\ref{sec:lowD}  two subsections are
devoted to the 1D and the 2D cases, respectively. In the 1D case we
will show that the composite midpoint rule is the optimal QMC rule
among all QMC rules (Theorem~\ref{lem:midp}). In the 2D case we show
that, for example, shifted Hammersley point sets 
achieve the optimal
convergence rate of the error (Example~\ref{ex:Hampts}).

\section{Basic definitions and facts}\label{sec:basics}
We begin with the notation used in the paper.

\subsection{Notation}
For a positive integer $d$, we write $[d]$ to denote
\[[d]=\{1,2,\dots,d\}.
\]
We use $\setu$ and $\setv$ for monotonically increasing
sequences of numbers from $[d]$, e.g.,
\[\setu=(u_1,\dots,u_k), \quad\mbox{where}\quad
  1\le u_1 <\cdots< u_k\le d, \quad\mbox{and}\quad k=|\setu|.
\]
This includes the empty sequence $\setu=\emptyset$ with $|\emptyset|=0$.
Often, it is convenient to treat the $\setu$'s as sets, since then
we can write $\setu\subseteq[d]$, $j\in\setu$, $\setu\setminus\setv$, etc.

For $\bsx=(x_1,\dots ,x_d)\in[0,1]^d$ and $\setu\subseteq[d]$,
by $\bsx_\setu$ we mean the point in $[0,1]^{|\setu|}$
with the coordinates $x_j$ for $j\in\setu$. That is,
\[\bsx_\setu=(x_{u_1},\dots,x_{u_k})\quad\mbox{for}\quad
    \setu=(u_1,\dots,u_k).
\]

For $\setu\not=\emptyset$, we write $\partial^{(\setu)}$ to denote
mixed first order partial derivatives,
\[\partial^{(\setu)}=\prod_{j\in\setu}\frac\partial{\partial x_{u_j}}.
\]
For $\setu=\emptyset$, $\partial^{(\emptyset)}$ is the identity operator. 

We consider weights $\bsgamma=(\gamma_\setu)_{\setu\subseteq[d]}$, 
where the $\gamma_\setu$'s are nonnegative reals.
Sometimes we will use $\setU_+$ to list the sequences corresponding to positive weights,
\[\setU_+=\{\setu\subseteq[d]\,:\,\gamma_\setu>0\}.
\]

\subsection{$\bsgamma$-weighted ANOVA spaces}\label{def:ANOVAsp}

For given $d$, weights $\bsgamma$, and $p\in[1,\infty]$, the corresponding
space $\calF_d=\calF_{d,p,\bsgamma}$ is the Banach space of functions
\[f:[0,1]^d\to\bbR
\]
endowed with the norm
\[\|f\|_{\calF_d}=\bigg[\sum_{\setu\subseteq[d]}
  \gamma_\setu^{-p}\,\bigg\|\int_{[0,1]^{d-|\setu|}}
  \partial^{(\setu)}f(\cdot_\setu;\bst_{[d]\setminus\setu})
  \rd\bst_{[d]\setminus\setu}\bigg\|_{L_p([0,1]^{|\setu|})}^p
  \bigg]^{1/p}\quad\mbox{if}\quad p<\infty,
\]
and
\[\|f\|_{\calF_d}=\max_{\setu\subseteq[d]}
  \gamma_\setu^{-1}\,\bigg\|\int_{[0,1]^{d-|\setu|}}
  \partial^{(\setu)}f(\cdot_\setu;\bst_{[d]\setminus\setu})
  \rd\bst_{[d]\setminus\setu}\bigg\|_{L_\infty([0,1]^{|\setu|})}
  \quad\mbox{if}\quad p=\infty.
\]
By a convention $0/0=0$ so that for $\gamma_\setu=0$ the
corresponding integral part of the definition is also zero.
Note also that for $\setu=\emptyset$, 
\[\int_{[0,1]^{d-|\emptyset|}}
  \partial^{(\emptyset)}f(\cdot_\emptyset;\bst_{[d]\setminus\emptyset})
  \rd\bst_{[d]\setminus\emptyset} = \int_{[0,1]^d}f(\bst)\rd\bst,
\]
and for $\setu=[d]$ the above $L_p$-norms equal 
$$
\left\|\frac{\partial^d}{\partial x_1 \cdots \partial x_d} f\right\|_{L_p([0,1]^d)}\ \ \ \mbox{ for all $p \in [1,\infty]$.}
$$
  
Consider next the ANOVA decomposition of functions $f\in\calF_d$,
\[f=\sum_{\setu\subseteq[d]}f_\setu,
\]
where
\[f_\emptyset=\int_{[0,1]^d}f(\bsx)\rd\bsx
\]
and, for nonempty $\setu$, $f_\setu$ depends only on $\bsx_\setu$, and
\[\int_0^1f_\setu(\bst)\rd t_j=0\quad\mbox{for any\ }j\in\setu.
\]

From \cite{HeRiWa2016} we know that $f_\setu\equiv0$ for
$\setu\notin\setU_+$, i.e.,
\[f\,=\,\sum_{\setu\in\setU_+}f_\setu.
\]
Moreover, for nonempty $\setu\in\setU_+$
\[
  f_\setu(\bsx)=\int_{[0,1]^{|\setu|}}h_\setu(\bst_\setu)\,K_\setu(\bsx_\setu,
  \bst_\setu) \rd\bst_\setu,
\]
where $h_\setu\in L_p([0,1]^{|\setu|})$ and
\[K_\setu(\bsx_\setu,\bst_\setu)=\prod_{\ell\in\setu}K(x_\ell,t_\ell)
\quad\mbox{with}\quad K(x,t)  
=\left\{\begin{array}{ll} t &\mbox{if\ }x\ge t,\\
    t-1&\mbox{if\ }x<t.\end{array}\right.
\]
More precisely, let $F_\emptyset$ be the space of constant functions
with the absolute value as its norm. For nonempty
$\setu\in\setU_+$, let
\[F_\setu=\left\{f_\setu=\int_{[0,1]^{|\setu|}}h_\setu(\bst_\setu)\,
    K_\setu(\cdot,\bst_\setu)\rd\bst_\setu\,:\,
    h_\setu\in L_p([0,1]^{|\setu|})\right\},
\]
which is a Banach space with the norm
\[\|f_\setu\|_{F_\setu}=\|h_\setu\|_{L_p([0,1]^{|\setu|})}.
\]
Then
\[\calF_d=\bigoplus_{\setu\in\setU_+}F_\setu\quad\mbox{and}
\quad \|f\|_{\calF_d}=\bigg[\sum_{\setu\in\setU_+}
  \gamma_\setu^{-p}\,\|f_\setu\|_{F_\setu}^p\bigg]^{1/p}
\]
with the obvious modifications if $p=\infty$.

\subsection{$\bsgamma$-weighted anchored spaces}\label{def:anchspace}

Consider the following Banach space
$\calF^{\bspitch}_d=\calF^{\bspitch}_{d,p,\bsgamma}$ of functions
\[  f:[0,1]^d\to\bbR
\]
whose norm is given by
\[\|f\|_{\calF^{\bspitch}_d}=\bigg[\sum_{\setu\in\setU_+}
  \gamma_\setu^{-p}\,\left\|
  \partial^{(\setu)}f(\cdot_\setu;\bszero_{[d]\setminus\setu})
  \right\|_{L_p([0,1]^{|\setu|})}^p
  \bigg]^{1/p}\quad\mbox{if}\quad p<\infty
\]
and
\[\|f\|_{\calF^{\bspitch}_d}=\max_{\setu\in\setU_+}
  \gamma_\setu^{-1}\,\left\|
  \partial^{(\setu)}f(\cdot_\setu;\bszero_{[d]\setminus\setu})
  \right\|_{L_\infty([0,1]^{|\setu|})}
  \quad\mbox{if}\quad p=\infty.
\]

Consider next the {\em anchored} decompositions of
$f\in\calF_d^{\bspitch}$,
\[ f=\sum_{\setu\in\setU_+}f_\setu^{\bspitch},
\]
where, for $\setu\not=\emptyset$, $f_\setu^{\bspitch}$ depends only on
$\bsx_\setu$ and
\[f_\setu^{\bspitch}(\bsx_\setu)=0 \quad\mbox{if $x_j=0$ for some $j\in\setu$}.
\]

We know from \cite{HeRiWa2016} that for nonempty $\setu\in\setU_+$
\[ f_\setu^{\bspitch}(\bsx)=\int_{[0,1]^d}h_\setu(\bst_\setu)\,
    K_\setu^{\bspitch}(\bsx_\setu,\bst_\setu)\rd\bst_\setu,
\]
where $h_\setu\in L_p([0,1]^{|\setu|})$ and
\[K^{\bspitch}_\setu(\bsx_\setu,\bst_\setu)\,=\,
\prod_{\ell\in\setu}K^{\bspitch}(x_\ell,t_\ell)
\quad\mbox{with}\quad K^{\bspitch}(x,t)
 \, =\,\left\{\begin{array}{ll} 1&\mbox{if\ }x\ge t,\\
    0&\mbox{if\ }x<t.\end{array}\right.
\]  

As in the previous section, let $F_\emptyset^{\bspitch}=F_\emptyset$
be the space of constant functions with
the absolute value as its norm. For nonempty $\setu$, let
\[F^{\bspitch}_\setu=\left\{f^{\bspitch}_\setu=\int_{[0,1]^{|\setu|}}h_\setu(\bst_\setu)\,
    K^{\bspitch}_\setu(\cdot,\bst_\setu)\rd\bst_\setu\,:\,
    h_\setu\in L_p([0,1]^{|\setu|})\right\},
\]
which is a Banach space with the norm
\[\|f^{\bspitch}_\setu\|_{F^{\bspitch}_\setu}=
\|h_\setu\|_{L_p([0,1]^{|\setu|})}.
\]
Then
\[\calF^{\bspitch}_d=\calF^{\bspitch}_{d,p,\bsgamma}=
\bigoplus_{\setu\in\setU_+}F^{\bspitch}_\setu\quad\mbox{and}
\quad \|f\|_{\calF_d^{\bspitch}}=\bigg[\sum_{\setu\in\setU_+}
\gamma_\setu^{-p}\,\|f^{\bspitch}_\setu\|_{F^{\bspitch}_\setu}^p\bigg]^{1/p}
\]
is the $\bsgamma$-weighted anchored space of functions with anchor
$\bszero$.

We now recall the following relation between the ANOVA and the
 anchored spaces, see \cite[Proposition~13]{HeRiWa2016}.

\begin{proposition}
For any $p \in [1,\infty]$ and $\bsgamma$ the following holds.
The $\bsgamma$-weighted anchored and ANOVA spaces are equal
(as sets of functions) if and only if 
\begin{equation}\label{condweimset}
\gamma_{\setu}>0 \ \mbox{  implies  }\ \gamma_{\setv}>0 \ \mbox{ for all }\ \setv \subseteq \setu.
\end{equation}
Moreover, if \eqref{condweimset} does not hold, then
\[\calF^{\bspitch}_{d,p,\bsgamma}\not \subseteq \calF_{d,p,\bsgamma}\
\mbox{ and }\ \calF_{d,p,\bsgamma}\not \subseteq
\calF^{\bspitch}_{d,p,\bsgamma}.
\]
\end{proposition}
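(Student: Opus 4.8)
The plan is to prove the two implications separately, reducing everything to the uniqueness of the ANOVA and anchored decompositions together with a single analytic lemma describing how the two decompositions transform into each other.

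\emph{The structural claim (analytic core).} First I would isolate the following claim: for every nonempty $\setu\subseteq[d]$ one has the containments $F_\setu^{\bspitch}\subseteq\bigoplus_{\setv\subseteq\setu}F_\setv$ and $F_\setu\subseteq\bigoplus_{\setv\subseteq\setu}F_\setv^{\bspitch}$. In words: a single component of one type, which depends only on $\bsx_\setu$, splits under the other decomposition into components indexed by subsets $\setv\subseteq\setu$, each again of finite norm, i.e.\ with mixed derivative $\partial^{(\setv)}$ in $L_p$. To prove the first containment I would write the ANOVA-$\setv$-projection of $g=f_\setu^{\bspitch}$ as ``integrate out the variables in $\setu\setminus\setv$, then subtract the means in the variables of $\setv$'', observe that $\partial^{(\setv)}$ annihilates the subtracted mean terms and commutes with integration in the remaining variables, and conclude that $\partial^{(\setv)}g_\setv=\int_{[0,1]^{|\setu\setminus\setv|}}\partial^{(\setv)}g(\cdot_\setv;\bst_{\setu\setminus\setv})\rd\bst_{\setu\setminus\setv}$. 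Since $g$ vanishes whenever $x_j=0$ for some $j\in\setu$, the fundamental theorem of calculus lets me express $\partial^{(\setv)}g$ as an iterated integral of $\partial^{(\setu)}g\in L_p$ over the box $[\bszero,\bsx_{\setu\setminus\setv}]$; boundedness of integration on $L_p$ then gives $\partial^{(\setv)}g_\setv\in L_p$. The second containment is symmetric: for an ANOVA component $g=f_\setu$ the anchored-$\setv$-component is obtained by anchoring the variables of $\setu\setminus\setv$ at $0$, and the mean-zero property $\int_0^1 g\rd x_j=0$ lets me write the trace at $x_j=0$ as $-\int_0^1(1-s)\,\partial_{x_j}(\cdot)\rd s$, again expressing $\partial^{(\setv)}g_\setv^{\bspitch}$ as an integral of $\partial^{(\setu)}g$ against a bounded kernel. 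For $p=\infty$ all of this only simplifies, using sup-bounds.

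\emph{The ``if'' direction.} Assume \eqref{condweimset}, i.e.\ that $\setU_+$ is downward closed. Given $f\in\calF^{\bspitch}_{d,p,\bsgamma}$, I write $f=\sum_{\setu\in\setU_+}f_\setu^{\bspitch}$ with $f_\setu^{\bspitch}\in F_\setu^{\bspitch}$ and apply the claim to each summand: each $f_\setu^{\bspitch}$ lies in $\bigoplus_{\setv\subseteq\setu}F_\setv$, and by downward closedness every such $\setv$ is again in $\setU_+$, so $f\in\bigoplus_{\setv\in\setU_+}F_\setv=\calF_{d,p,\bsgamma}$. The reverse inclusion follows in exactly the same way from the second containment in the claim, whence the two spaces coincide as sets.

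\emph{The ``only if'' direction and the ``moreover''.} For the contrapositive I would exhibit explicit separating functions. If \eqref{condweimset} fails, fix $\setu\in\setU_+$ and $\setv\subseteq\setu$ with $\setv\notin\setU_+$ (necessarily $\setv\subsetneq\setu$). To see $\calF^{\bspitch}_{d,p,\bsgamma}\not\subseteq\calF_{d,p,\bsgamma}$, take $f(\bsx)=\prod_{\ell\in\setu}x_\ell$: it vanishes whenever $x_j=0$ with $j\in\setu$ and depends only on $\bsx_\setu$, so its anchored decomposition is the single component of type $\setu\in\setU_+$, whence $f\in\calF^{\bspitch}_{d,p,\bsgamma}$; however, expanding $\prod_{\ell\in\setu}x_\ell=\prod_{\ell\in\setu}((x_\ell-\tfrac12)+\tfrac12)$ shows that its ANOVA-$\setv$-component equals $(\tfrac12)^{|\setu|-|\setv|}\prod_{\ell\in\setv}(x_\ell-\tfrac12)\neq0$ with $\setv\notin\setU_+$, and since every function in $\calF_{d,p,\bsgamma}$ has vanishing ANOVA components outside $\setU_+$, we get $f\notin\calF_{d,p,\bsgamma}$. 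Symmetrically, $g(\bsx)=\prod_{\ell\in\setu}(x_\ell-\tfrac12)\in F_\setu\subseteq\calF_{d,p,\bsgamma}$ has anchored-$\setv$-component $(-\tfrac12)^{|\setu|-|\setv|}\prod_{\ell\in\setv}x_\ell\neq0$ for $\setv\notin\setU_+$, so by uniqueness of the anchored decomposition $g\notin\calF^{\bspitch}_{d,p,\bsgamma}$, giving $\calF_{d,p,\bsgamma}\not\subseteq\calF^{\bspitch}_{d,p,\bsgamma}$. This proves both non-inclusions at once, and in particular that the spaces are unequal.

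The monomial computations in the last paragraph are routine. The real work, and the step I expect to be the main obstacle, is the uniform $L_p$-control in the claim: verifying that passing between the anchored and the ANOVA representation of a component keeps the relevant mixed derivative in $L_p$, which is exactly where the fundamental-theorem-of-calculus-plus-bounded-integration estimate (and the separate treatment of $p=\infty$) must be carried out carefully.
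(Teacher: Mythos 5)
Your proposal is correct, but note that the paper itself contains no proof of this proposition: it is recalled verbatim from \cite[Proposition~13]{HeRiWa2016}, so your argument cannot match ``the paper's proof'' --- instead you have supplied a self-contained one, in the spirit of the embedding techniques of the cited literature. Your two-sided component-transformation lemma is the right analytic core: for an anchored component $g\in F^{\bspitch}_\setu$ the identity $\partial^{(\setv)}g(\bsx)=\int_{[\bszero,\bsx_{\setu\setminus\setv}]}\partial^{(\setu)}g(\bsx_\setv,\bs{s})\,\mathrm{d}\bs{s}$ (valid because $\partial^{(\setv)}g$ still vanishes on the faces $x_j=0$, $j\in\setu\setminus\setv$) combined with Minkowski's integral inequality gives the needed $L_p$-control, and the mean-zero trick $g(0)=-\int_0^1(1-s)g'(s)\,\mathrm{d}s$ handles the converse direction; together with downward closedness of $\setU_+$ this yields the ``if'' part, with norm finiteness automatic since the sums are finite. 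Your counterexamples are also correct and complete: $\prod_{\ell\in\setu}x_\ell$ is a single anchored component of type $\setu$ (density $h\equiv 1$) whose ANOVA-$\setv$-component $2^{-(|\setu|-|\setv|)}\prod_{\ell\in\setv}(x_\ell-\tfrac12)$ is nonzero, and $\prod_{\ell\in\setu}(x_\ell-\tfrac12)$ is a single ANOVA component (indeed $\int_0^1K(x,t)\,\mathrm{d}t=x-\tfrac12$) whose anchored-$\setv$-component $(-\tfrac12)^{|\setu|-|\setv|}\prod_{\ell\in\setv}x_\ell$ is nonzero; both computations also cover the case $\setv=\emptyset$, which your ``necessarily $\setv\subsetneq\setu$'' correctly allows. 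One point worth making explicit: concluding non-membership from a nonzero component outside $\setU_+$ uses the direct-sum characterization $\calF_d=\bigoplus_{\setu\in\setU_+}F_\setu$ (equivalently the $0/0=0$ convention in the ANOVA norm, and the analogous convention for the anchored space, whose displayed norm sums only over $\setU_+$); you invoke exactly this via uniqueness of the two decompositions, which is the same fact the paper imports from \cite{HeRiWa2016}. In exchange for being longer than a citation, your route makes the proposition elementary and transparent, and the transformation lemma is of independent use since it quantifies how components migrate between the two decompositions.
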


\subsection{Discrepancy and weighted discrepancy}\label{def:disc}

We now recall the definition of (weighted) discrepancy, which is
related to the errors of QMC methods studied in this
paper.

For a point set $\calP=\{\bsx_1,\ldots,\bsx_n\}$ in $[0,1)^d$ the
local discrepancy function $\Delta_{\calP}:[0,1]^d \rightarrow \bbR$
is defined as 
$$
\Delta_{\calP}(\bst)=\frac{|\{j \in \{1,\ldots,n\}\ : \ \bsx_j
  \in [\bszero,\bst)\}|}{n}-\lambda( [\bszero,\bst)),
$$ 
where $\bst=(t_1,\ldots,t_d)\in [0,1]^d$,
$[\bszero,\bst)=[0,t_1)\times \cdots \times [0,t_d)$ and
$\lambda( [\bszero,\bst))=t_1\cdots t_d$. 
The local discrepancy function can be expressed in terms of the
indicator function, namely, 
$$
\Delta_{\calP}(\bst)=\frac{1}{n} \sum_{j=1}^n
1_{[\bszero,\bst)}(\bsx_j)-\lambda( [\bszero,\bst)), 
$$ 
where $1_{[\bszero,\bst)}(\bsx_j)=1$ if $\bsx_j \in [\bszero,\bst)$
and 0 otherwise. Note that 
$$
1_{[\bszero,\bst)}(\bsx_j)=\prod_{i=1}^d 1_{[0,t_i)}(x_{j,i}), 
$$ 
where $x_{j,i}$ is the $i^{{\rm th}}$ component of $\bsx_j$.

For $p^\ast \in [1,\infty]$ the $L_{p^\ast}$-discrepancy of $\calP$
is defined as the $L_{p^\ast}$-norm of the local discrepancy function, i.e.,
\[L_{p^\ast}(\calP)=\|\Delta_{\calP}\|_{L_{p^*}([0,1]^d)}.\]

Furthermore, the $\bsgamma$-weighted $L_{p^\ast}$-discrepancy of $\calP$ is defined as
\[L_{p^\ast,\bsgamma}(\calP)= \left[\sum_{\emptyset\not=\setu\in\setU_+}
  \gamma_{\setu}^{p^\ast} \|\Delta_{\calP_{\setu}}\|_{L_{p^\ast}([0,1]^{|\setu|})}^{p^\ast} \right]^{1/p^\ast}\mbox{for}\quad p^*<\infty\]
and 
\[
L_{\infty,\bsgamma}(\calP)= \max_{\emptyset\not=\setu\in\setU_+}
\gamma_{\setu} \|\Delta_{\calP_{\setu}}\|_{L_{\infty}([0,1]^{|\setu|})}\quad \mbox{for}\quad p^*<\infty,
\]
where $\Delta_{\calP_{\setu}}$ denotes the local discrepancy function of the set that consists of the projected points of $\calP$ to the coordinates with indices in $\setu$. 
Weighted $L_{p^\ast}$-discrepancy was first introduced and studied by Sloan and Wo\'{z}niakowski~\cite{SW1998}. For further information 
on weighted discrepancy we also refer to \cite{DP2010,NW2010}.

\section{Quasi-Monte Carlo methods and their errors}\label{sec:QMC}
In this main section we consider QMC methods of the form
\[\QMC_{d,n}(f)=\frac{1}{n}\,\sum_{j=1}^nf(\bst_j)
\]
for some deterministically chosen points $\bst_j\in[0,1]^d$. We are interested in their worst
case errors with respect to the unit ball of the space
$\calF_d$ defined as 
\[{\rm error}(\QMC_{d,n};\calF_d)=\sup_{f \in \calF_d \atop \|f\|_{\calF_d} \le 1}
\left|\int_{[0,1]^d}f(\bsx)\rd\bsx-\QMC_{d,n}(f)\right|. 
\]

\begin{remark}\label{re:altappr}\rm
If Condition \eqref{condweimset} is satisfied, then 
to study
QMC in the ANOVA space one may consider the embedding operator
$\imath: \calF_d \rightarrow   \calF_d^{\bspitch}$, $\imath(f)=f$. Then
\[\frac1{\|\imath^{-1}\|} \,
{\rm error}(\QMC_{d,n};\calF_d^{\bspitch})\, \le\, 
     {\rm error}(\QMC_{d,n};\calF_d)\, \le\, \|\imath\| \,
     {\rm error}(\QMC_{d,n};\calF_d^{\bspitch}),
\]
where $\|\imath\|$ and $\|\imath^{-1}\|$ are the operator norms of
the embedding operator $\imath$ and its inverse $\imath^{-1}$,
respectively. It is well known (see, e.g., \cite{SW1998}) that 
$$
{\rm error}(\QMC_{d,n};\calF_d^{\bspitch})= L_{p^\ast,\bsgamma}(\overline{\calP}),
$$ 
where $L_{p^\ast,\bsgamma}(\overline{\calP})$ is the weighted
$L_{p^\ast}$-discrepancy of the point set\footnote{We remark that in
\cite{SW1998} the anchored space with anchor $\bsone$ is considered
which results in a worst case error of exactly
$L_{p^\ast,\bsgamma}(\calP)$, where $\calP$ is the node set of the
QMC rule. Here we have chosen the anchor as $\bszero$, and therefore
in the formula for the worst case error the point set
$\overline{\calP}$ appears.} 
$$
\overline{\calP}=\{\bsone - \bst_j \ : \ j=1,\ldots,n\},
$$ 
where $\bsone - \bst_j$ is defined as the component-wise difference of
the vector containing only ones and $\bst_j$.
Estimates or, in some cases, exact values of
$\max\{\|\imath\|,\|\imath^{-1}\|\}$ can be found in, e.g., 
\cite{GHHRW2017,HR2015,HeRiWa2016,HKPW2019,HS2016,KPW2016}
(see also Remark \ref{rem10}). 
Results on weighted discrepancy can be found in, e.g.,
\cite{DP2010,DP2015,LP2003,SW1998}.
\end{remark}

However, in order to follow the approach as sketched in
Remark~\ref{re:altappr} one requires the assumption that Condition
\eqref{condweimset} is satisfied. For example, this condition is not
satisfied for weights of the form $\gamma_{[d]}=1$ and
$\gamma_{\setu}=0$ for all $\setu \subsetneq [d]$. In the present
paper we follow a direct approach of an error analysis for QMC rules
in the $\bsgamma$-weighted ANOVA space that does not require
Condition \eqref{condweimset} and the embedding of the ANOVA space
into the anchored space. 

\subsection{A formula for the worst case error}

The following theorem gives a formula for
the worst case integration error.

\begin{theorem}\label{thm1}
For any QMC rule $\QMC_{d,n}$ 
\[{\rm error}(\QMC_{d,n};\calF_d) = \frac{1}{n}
  \bigg[\sum_{\emptyset\not=\setu\in\setU_+}\gamma_\setu^{p^*}\,
  \int_{[0,1]^{|\setu|}}\bigg|\sum_{j=1}^nK_\setu(\bsx_{j,\setu},\bst_\setu)
  \bigg|^{p^*}\rd\bst_\setu\bigg]^{1/p^*}\quad
  \mbox{for}\quad p^*<\infty
\]
and
\[{\rm error}(\QMC_{d,n};\calF_d) = \frac{1}{n}
  \max_{\emptyset\not=\setu\in\setU_+} \gamma_\setu\,\bigg\|\sum_{j=1}^n
  K_\setu(\bsx_{j,\setu},\cdot_\setu)\bigg\|_{L_\infty([0,1]^{|\setu|})}
  \quad\mbox{for}\quad p^*=\infty,
\]
where we write $\bsx_{j,\setu}$ short hand for $(\bsx_j)_{\setu}$.
\end{theorem}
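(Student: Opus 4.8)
The plan is to read off the worst-case error as the dual norm of the linear functional $f\mapsto\int_{[0,1]^d}f(\bsx)\rd\bsx-\QMC_{d,n}(f)$ on $\calF_d$, and to evaluate that dual norm by two nested duality arguments: an $L_p$--$L_{p^*}$ duality inside each summand $F_\setu$ of the decomposition $\calF_d=\bigoplus_{\setu\in\setU_+}F_\setu$, and a finite $\ell_p$--$\ell_{p^*}$ (Hölder) duality across the summands.

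First I would insert the ANOVA decomposition $f=\sum_{\setu\in\setU_+}f_\setu$ into the error. Because $f_\emptyset=\int_{[0,1]^d}f(\bsx)\rd\bsx$ is constant, so that $\QMC_{d,n}(f_\emptyset)=f_\emptyset$, and because for nonempty $\setu$ each $f_\setu$ integrates to zero in every one of its variables, so that $\int_{[0,1]^d}f_\setu(\bsx)\rd\bsx=0$, the $\setu=\emptyset$ contribution cancels and
\[\int_{[0,1]^d}f(\bsx)\rd\bsx-\QMC_{d,n}(f)=-\sum_{\emptyset\not=\setu\in\setU_+}\frac1n\sum_{j=1}^nf_\setu(\bsx_j).\]
Substituting $f_\setu(\bsx)=\int_{[0,1]^{|\setu|}}h_\setu(\bst_\setu)K_\setu(\bsx_\setu,\bst_\setu)\rd\bst_\setu$ and interchanging the finite sum with the integral (legitimate since $K$ is bounded) turns this into
\[\int_{[0,1]^d}f(\bsx)\rd\bsx-\QMC_{d,n}(f)=-\sum_{\emptyset\not=\setu\in\setU_+}\int_{[0,1]^{|\setu|}}h_\setu(\bst_\setu)\,g_\setu(\bst_\setu)\rd\bst_\setu,\]
where $g_\setu(\bst_\setu):=\frac1n\sum_{j=1}^nK_\setu(\bsx_{j,\setu},\bst_\setu)$. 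The error is now a linear expression in the free parameters $(h_\setu)_{\emptyset\not=\setu\in\setU_+}$ and is independent of $f_\emptyset$.

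To pass to the supremum over the unit ball I would use that $\|f\|_{\calF_d}$ is the $\ell_p$-norm of the sequence $(\gamma_\setu^{-1}\|h_\setu\|_{L_p([0,1]^{|\setu|})})_{\setu\in\setU_+}$, so the $h_\setu$ may vary independently subject only to this one coupled constraint; in particular I take $f_\emptyset=0$ (it does not affect the error) and replace each $h_\setu$ by $\pm h_\setu$ to align signs, whence the supremum of the absolute value equals $\sup\sum_{\emptyset\not=\setu\in\setU_+}|\int_{[0,1]^{|\setu|}}h_\setu g_\setu\rd\bst_\setu|$. Blockwise $L_p$--$L_{p^*}$ duality gives $\sup_{\|h_\setu\|_{L_p}\le b_\setu}|\int h_\setu g_\setu\rd\bst_\setu|=b_\setu\|g_\setu\|_{L_{p^*}}$, where $g_\setu$, being bounded, lies in every $L_{p^*}([0,1]^{|\setu|})$. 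Setting $c_\setu=\gamma_\setu^{-1}b_\setu$, it then remains to maximize $\sum_\setu c_\setu\,\gamma_\setu\|g_\setu\|_{L_{p^*}}$ over all sequences $(c_\setu)$ with $\ell_p$-norm at most $1$; by Hölder's inequality with exponents $p$ and $p^*$ this maximum is $\big(\sum_{\emptyset\not=\setu\in\setU_+}\gamma_\setu^{p^*}\|g_\setu\|_{L_{p^*}}^{p^*}\big)^{1/p^*}$ when $p^*<\infty$ and $\max_{\emptyset\not=\setu\in\setU_+}\gamma_\setu\|g_\setu\|_{L_\infty}$ when $p^*=\infty$, attained in both cases. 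Unfolding $g_\setu=\frac1n\sum_{j=1}^nK_\setu(\bsx_{j,\setu},\cdot_\setu)$ and extracting the factor $1/n$ yields precisely the two asserted formulas.

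I expect the two duality evaluations to be entirely routine; the one point deserving care is the very first reduction, namely that letting $f$ range over the unit ball of $\calF_d$ is exactly the same as letting the $h_\setu$ range independently under the single coupled constraint that the $\ell_p$-norm of $(\gamma_\setu^{-1}\|h_\setu\|_{L_p})_\setu$ be at most $1$. This is precisely what the direct-sum description $\calF_d=\bigoplus_{\setu\in\setU_+}F_\setu$ with $\|f_\setu\|_{F_\setu}=\|h_\setu\|_{L_p}$ from Section~\ref{def:ANOVAsp} provides, so beyond this bookkeeping I anticipate no genuine obstacle.
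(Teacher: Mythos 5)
Your proposal is correct and follows essentially the same route as the paper's proof: both expand the error via the ANOVA decomposition and the representation $f_\setu=\int h_\setu K_\setu$, then apply $L_p$--$L_{p^*}$ H\"older within each block $F_\setu$ and $\ell_p$--$\ell_{p^*}$ H\"older across the blocks $\setu\in\setU_+$. The only (cosmetic) difference is that you invoke exactness of the two dualities abstractly, whereas the paper exhibits the extremizer $h_\setu=\bigl|\sum_{j=1}^nK_\setu(\bsx_{j,\setu},\cdot)\bigr|^{p^*-1}$ explicitly and verifies $\|h_\setu\|_{L_p}=\|\sum_j K_\setu\|_{L_{p^*}}^{p^*-1}$ by hand.
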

\begin{proof}
We present the proof only for $p^*<\infty$ since it is very similar
for $p^*=\infty$.

In the following, let, for $\setu\subseteq [d]$, $\QMC_{\setu,n}$ denote the projection of the rule $\QMC_{d,n}$ onto 
those coordinates with indices in $\setu$ (i.e., the rule is based on $\abs{\setu}$-dimensional 
integration nodes obtained by projecting the nodes of $\QMC_{d,n}$ accordingly).
For any $f\in\calF_d$,
\begin{eqnarray}
  \bigg|\int_{[0,1]^d}f(\bsx)\rd\bsx-\QMC_{d,n}(f)\bigg|&=&
  \bigg|\sum_{\emptyset\not=\setu\in\setU_+}
  \QMC_{d,n}(f_\setu)\bigg|\nonumber\\
&=&\frac{1}{n}\,\bigg|\sum_{\emptyset\not=\setu\in\setU_+}\sum_{j=1}^n
  \int_{[0,1]^{|\setu|}}h_\setu(\bst_\setu)\,
  K_\setu(\bsx_{j,\setu},\bst_\setu)
    \rd\bst_\setu\bigg|\nonumber\\
&=& \frac{1}{n}\bigg|\sum_{\emptyset\not=\setu\in\setU_+}
 \int_{[0,1]^{|\setu|}}h_\setu(\bst_\setu)
 \sum_{j=1}^nK_\setu(\bsx_{j,\setu},\bst_\setu)\rd\bst_\setu\bigg|\nonumber\\
&\le&\frac{1}{n}\,\sum_{\emptyset\not=\setu\in\setU_+}\|h_\setu\|_{L_p}\,
 \bigg[\int_{[0,1]^{|\setu|}}\bigg|\sum_{j=1}^n
   K_\setu(\bsx_{j,\setu},\bst_\setu)
      \bigg|^{p^*}\rd\bst_\setu\bigg]^{1/p^*}\label{ineq:1}.
\end{eqnarray}
Using H\"older's inequality one more time we get another upper bound,
\begin{equation}\label{ineq:2}
\bigg|\int_{[0,1]^d}f(\bsx)\rd\bsx-\QMC_{d,n}(f)\bigg|\le \frac{1}{n}
\|f\|_{\calF_d}\,\bigg[\sum_{\emptyset\not=\setu\in\setU_+}
  \gamma_\setu^{p^*}\,
  \int_{[0,1]^{|\setu|}}\bigg|\sum_{j=1}^nK_\setu(\bsx_{j,\setu},\bst_\setu)
      \bigg|^{p^*}\rd\bst_\setu\bigg]^{1/p^*}.
\end{equation}

To prove equality, we will use the fact that H\"older's inequality
is sharp. For that purpose recall that for two functions $f$ and $g$ 
\[\int_D|g\,f|=\|g\|_{L_p(D)}\,\|f\|_{L_{p^*}(D)}\quad\mbox{if}\quad
  |g|=c|f|^{p^*-1}\mbox{\ a.e. on D},
\]
and for two sequences of numbers  $a_i$ and $b_i$
\[ \sum_i|a_i\,b_i|=\bigg[\sum_i|a_i|^p\bigg]^{1/p}\,
  \bigg[\sum_i|b_i|^{p^*}\bigg]^{1/p^*}\quad\mbox{if}\quad
  |a_i|=c|b_i|^{p^*-1}\mbox{\ for all $i$}. 
\]
Consider next the function $f$ with $h_\emptyset=0$ and 
\[  h_\setu(\bst_\setu)\,=\,
  \bigg|\sum_{j=1}^nK_\setu(\bsx_{j,\setu},\bst_\setu)\bigg|^{p^*-1}.
\]
For such $h_\setu$'s we have equality in \eqref{ineq:1}.
Moreover, for every $\setu\not=\emptyset$,
\begin{eqnarray*}
  \|h_\setu\|_{L_p([0,1]^{|\setu|})}&=&\bigg[\int_{[0,1]^{|\setu|}}
  \bigg|\sum_{j=1}^nK_\setu(\bsx_{j,\setu},\bst_\setu)\bigg|^{p(p^*-1)}\rd\bst_\setu
  \bigg]^{1/p}\\
&=&\bigg\|\sum_{j=1}^nK_\setu(\bsx_{j,\setu},\cdot_\setu)
\bigg\|_{L_{p^*}([0,1]^{|\setu|})}^{p^*-1},
\end{eqnarray*}
since $p(p^*-1)=p^*$ and $1/p=(p^*-1)/p^*$.
Therefore we have equality also in \eqref{ineq:2} which completes the proof.
\end{proof}

Next we relate the worst case error to the $L_{p^\ast}$-discrepancy of
the point sets underlying the QMC rule.

\begin{lemma}\label{le:sumprod}
For every nonempty $\setu \subseteq [d]$ 
we have
\[\sum_{j=1}^n K_\setu(\bsx_{j,\setu},\bst_\setu)  = n
\sum_{k=1}^{|\setu|} (-1)^k \sum_{\setv \subseteq \setu\atop
  |\setv|=k} \Delta_{\calP_{\setv}}(\bst_{\setv})
\prod_{i \in\setu\setminus\setv} t_i, 
\]
where $\Delta_{\calP_{\setv}}(\bst_{\setv})$ denotes the local
discrepancy function in $\bst_{\setv}$ of the set that consists of the points of $\calP$
projected onto the coordinates with indices in $\setv$.   
\end{lemma}

\begin{proof}
Observe that 
\begin{equation}\label{kersufo}
  \sum_{j=1}^n K_\setu(\bsx_{j,\setu},\bst_\setu) =
  \sum_{j=1}^n \prod_{i \in \setu}(t_i-1_{[0,t_i)}(x_{j,i})).
\end{equation}
Let us now rewrite the product above as
\begin{eqnarray*}
  \prod_{i \in \setu}(t_i-1_{[0,t_i)}(x_{j,i})) & = &
 \sum_{\setv\subseteq \setu} \prod_{i \in \setv}(-1_{[0,t_i)}(x_{j,i}))
 \prod_{i \in \setu\setminus\setv} t_i\\
& = & \sum_{k=0}^{|\setu|} \sum_{\setv \subseteq \setu\atop |\setv|=k}
  \prod_{i \in \setv} (-1_{[0,t_i)}(x_{j,i}))
  \prod_{i \in \setu\setminus\setv} t_i.
\end{eqnarray*}
We have 
\begin{eqnarray*}
\sum_{k=0}^{|\setu|}  \sum_{\setv \subseteq \setu\atop |\setv|=k} (-1)^{|\setv|}
  \prod_{i \in \setu} t_i  & = & \left(\prod_{i\in \setu} t_i\right)
\sum_{k=0}^{|\setu|} {|\setu| \choose k} (-1)^k = \left(\prod_{i\in \setu}t_i\right)
  (1+(-1))^{|\setu|}=0.
\end{eqnarray*}
Hence
\begin{eqnarray*}
\prod_{i \in \setu}(t_i-1_{[0,t_i)}(x_{j,i}))& = & \sum_{k=0}^{|\setu|}\sum_{\setv
 \subseteq\setu\atop|\setv|=k}\left[\prod_{i \in \setv} (-1_{[0,t_i)}(x_{j,i}))
 \prod_{i \in \setu\setminus\setv} t_i -(-1)^{|\setv|} \prod_{i\in\setu} t_i
 \right] \\
& = & \sum_{k=0}^{|\setu|} (-1)^k \sum_{\setv \subseteq \setu\atop |\setv|=k}
\left[\prod_{i \in \setv} 1_{[0,t_i)}(x_{j,i}) - \prod_{i \in \setv} t_i \right]
      \prod_{i \in \setu\setminus\setv} t_i.
\end{eqnarray*}
For $k=0$ we have $\setv=\emptyset$ and hence
\[\prod_{i \in \setv}1_{[0,t_i)}(x_{j,i}) - \prod_{i \in \setv} t_i=0.
\]
This means that it suffices to start the summation with the index 1. Hence
\begin{eqnarray*}
\prod_{i \in \setu}(t_i-1_{[0,t_i)}(x_{j,i})) =  \sum_{k=1}^{|\setu|}(-1)^k
\sum_{\setv \subseteq \setu\atop |\setv|=k}  \left[1_{[\bszero,\bst_{\setv})}
(\bsx_{j,\setv}) - \lambda([\bszero,\bst_{\setv})) \right]
\prod_{i \in \setu\setminus\setv} t_i.
\end{eqnarray*}
Summation over all $j=1,\ldots,n$ gives
$$
\sum_{j=1}^n\prod_{i \in \setu}(t_i-1_{[0,t_i)}(x_{j,i})) = 
\sum_{k=1}^{|\setu|} (-1)^k \sum_{\setv \subseteq \setu\atop  |\setv|=k}
n\,\Delta_{\calP_{\setv}}(\bst_{\setv})  \prod_{i \in \setu\setminus\setv} t_i.
$$
\end{proof}

The following corollary to Theorem~\ref{thm1} bounds the error
in terms of the weighted $L_{p^\ast}$-discrepancy of the node set
$\calP$ underlying the $\QMC$ rule for suitably modified weights
$\widetilde{\bsgamma}$.

\begin{corollary}\label{cor1}
For any QMC rule $\QMC_{d,n}$ and $p \in [1,\infty]$ we have 
\[{\rm error}(\QMC_{d,n};\calF_d) \le L_{p^\ast,\widetilde{\bsgamma}}(\calP),
\]
where the latter is the
$\widetilde{\bsgamma}$-weighted $L_{p^\ast}$-discrepancy of $\calP$
and 
$\widetilde{\bsgamma}=(\widetilde{\gamma}_{\setu})_{\setu\subseteq [d]}$
with 
$$
\widetilde{\gamma}_{\setu}=\left((p^*+1)^{|\setu|}
  \sum_{\setv\in\setU_+ \atop \setv \supseteq \setu} \gamma_\setv^{p^*}\,
 \left(\frac{2^{p^\ast -1}}{p^*+1}\right)^{|\setv|}\,
   \right)^{1/p^*} \quad\mbox{for}\quad p^*<\infty,
$$
and 
$$
\widetilde{\gamma}_{\setu}= 2^{|\setu|} \gamma_{\setu}
\quad\mbox{for}\quad p^*=\infty.
$$
\end{corollary}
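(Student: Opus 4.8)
The plan is to feed the discrepancy expansion of Lemma~\ref{le:sumprod} into the exact error formula of Theorem~\ref{thm1} and then estimate the resulting integrals by Minkowski's inequality and a power-mean inequality; I treat $p^*<\infty$ first and dispose of $p^*=\infty$ separately. Dividing the identity in Lemma~\ref{le:sumprod} by $n$ gives, for every nonempty $\setu\in\setU_+$,
\[\frac1n\sum_{j=1}^n K_\setu(\bsx_{j,\setu},\bst_\setu)=\sum_{\emptyset\neq\setv\subseteq\setu}(-1)^{|\setv|}\,\Delta_{\calP_\setv}(\bst_\setv)\prod_{i\in\setu\setminus\setv}t_i.\]
Substituting this into Theorem~\ref{thm1} (absorbing the prefactor $1/n$ into the norm) writes ${\rm error}(\QMC_{d,n};\calF_d)^{p^*}=\sum_{\emptyset\neq\setu\in\setU_+}\gamma_\setu^{p^*}\,I_\setu$, where $I_\setu$ is the $p^*$-th power of the $L_{p^*}([0,1]^{|\setu|})$-norm of the alternating sum above. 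The goal is to bound each $I_\setu$ by a weighted combination of the individual projected discrepancies $\|\Delta_{\calP_\setv}\|_{L_{p^*}}^{p^*}$.

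First I would apply the triangle (Minkowski) inequality in $L_{p^*}([0,1]^{|\setu|})$ to split the norm into a sum over the $2^{|\setu|}-1$ summands. Each summand is a product of a function of $\bst_\setv$ and a function of $\bst_{\setu\setminus\setv}$, so by the tensor-product (Tonelli) structure of the $L_{p^*}$-norm its norm equals $\|\Delta_{\calP_\setv}\|_{L_{p^*}([0,1]^{|\setv|})}\prod_{i\in\setu\setminus\setv}\big(\int_0^1 t^{p^*}\rd t\big)^{1/p^*}$, and $\big(\int_0^1 t^{p^*}\rd t\big)^{1/p^*}=(p^*+1)^{-1/p^*}$. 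This gives
\[I_\setu^{1/p^*}\le\sum_{\emptyset\neq\setv\subseteq\setu}\|\Delta_{\calP_\setv}\|_{L_{p^*}}\,(p^*+1)^{-(|\setu|-|\setv|)/p^*}.\]
Raising to the power $p^*$ and using the power-mean inequality $\left(\sum_{i=1}^m a_i\right)^{p^*}\le m^{p^*-1}\sum_{i=1}^m a_i^{p^*}$ with $m=2^{|\setu|}-1\le 2^{|\setu|}$ yields
\[I_\setu\le (2^{p^*-1})^{|\setu|}\sum_{\emptyset\neq\setv\subseteq\setu}\|\Delta_{\calP_\setv}\|_{L_{p^*}}^{p^*}\,(p^*+1)^{-(|\setu|-|\setv|)}.\]

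The final step for $p^*<\infty$ is to substitute this into $\sum_\setu\gamma_\setu^{p^*}I_\setu$ and interchange the order of summation, $\sum_{\emptyset\neq\setu\in\setU_+}\sum_{\emptyset\neq\setv\subseteq\setu}=\sum_{\emptyset\neq\setv}\sum_{\setu\in\setU_+,\,\setu\supseteq\setv}$. Pulling the factor $(p^*+1)^{|\setv|}$ out of $(p^*+1)^{-(|\setu|-|\setv|)}$, the coefficient of $\|\Delta_{\calP_\setv}\|_{L_{p^*}}^{p^*}$ becomes $(p^*+1)^{|\setv|}\sum_{\setu\in\setU_+,\,\setu\supseteq\setv}\gamma_\setu^{p^*}\big(2^{p^*-1}/(p^*+1)\big)^{|\setu|}=\widetilde\gamma_\setv^{p^*}$, so the right-hand side is exactly $L_{p^*,\widetilde\bsgamma}(\calP)^{p^*}$; note that $\widetilde\gamma_\setv>0$ forces $\setv$ to lie in some $\setu\in\setU_+$, so the index set agrees with that in the definition of the weighted discrepancy.

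For $p^*=\infty$ the error is the maximum over $\setu$ of $\gamma_\setu\,n^{-1}\|\sum_j K_\setu\|_{L_\infty}$; using $\prod_{i\in\setu\setminus\setv}t_i\le1$ and the triangle inequality this is at most $\gamma_\setu\sum_{\emptyset\neq\setv\subseteq\setu}\|\Delta_{\calP_\setv}\|_{L_\infty}$. The extra ingredient I would invoke here is the monotonicity of the sup-norm discrepancy under projections: for $\setv\subseteq\setu$ one has $\|\Delta_{\calP_\setv}\|_{L_\infty}\le\|\Delta_{\calP_\setu}\|_{L_\infty}$, obtained by setting the coordinates indexed by $\setu\setminus\setv$ to $1$ in $\Delta_{\calP_\setu}$ (legitimate since the nodes lie in $[0,1)^d$). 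Bounding each sub-projection discrepancy by $\|\Delta_{\calP_\setu}\|_{L_\infty}$ and counting the $2^{|\setu|}-1\le 2^{|\setu|}$ subsets gives $\gamma_\setu\,n^{-1}\|\sum_j K_\setu\|_{L_\infty}\le 2^{|\setu|}\gamma_\setu\|\Delta_{\calP_\setu}\|_{L_\infty}=\widetilde\gamma_\setu\|\Delta_{\calP_\setu}\|_{L_\infty}$, and taking the maximum over $\setu$ yields the claim. I expect the main obstacle to be the bookkeeping of constants: tracking the exponents through Minkowski's and the power-mean inequality so that the interchange of summation reproduces \emph{exactly} the stated $\widetilde\gamma_\setv$, together with recognising that the $p^*=\infty$ case genuinely needs the projection-monotonicity of the discrepancy rather than a naive $p^*\to\infty$ limit of the finite-$p^*$ weights, which would only produce the coarser weight $\max_{\setu\supseteq\setv}2^{|\setu|}\gamma_\setu$.
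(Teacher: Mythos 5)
Your proposal is correct and follows essentially the same route as the paper: both feed Lemma~\ref{le:sumprod} into Theorem~\ref{thm1}, apply a triangle inequality plus the power-mean bound of Lemma~\ref{le:easy} (your norm-level Minkowski step gives the identical bound as the paper's pointwise estimate followed by integration), and interchange the sums to produce exactly $\widetilde{\gamma}_\setv$. Your explicit justification of the projection monotonicity $\|\Delta_{\calP_\setv}\|_{L_\infty}\le\|\Delta_{\calP_\setu}\|_{L_\infty}$ in the $p^*=\infty$ case is a welcome elaboration of a step the paper leaves implicit, but it does not constitute a different approach.
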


For the proof of Corollary~\ref{cor1} we use the following
simple lemma.

\begin{lemma}\label{le:easy}
For $p^*\in  [1,\infty)$ and $x_k\ge 0$ for $k=1,2,\ldots,\ell$ we have
\[ \left(\sum_{k=1}^\ell x_k \right)^{p^\ast}\, \le\,
\ell^{p^\ast-1}  \sum_{k=1}^\ell x_k^{p^\ast}
\]
with equality if $x_1=\cdots=x_\ell$.
\end{lemma}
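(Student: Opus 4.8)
The plan is to recognize Lemma~\ref{le:easy} as a direct consequence of Hölder's inequality (equivalently, of the convexity of $t\mapsto t^{p^*}$), which is exactly the tool already exploited in the proof of Theorem~\ref{thm1}. First I would dispose of the degenerate case $p^*=1$, where $\ell^{p^*-1}=\ell^0=1$ and the claimed inequality reduces to the identity $\sum_{k=1}^\ell x_k=\sum_{k=1}^\ell x_k$; equality always holds here, consistent with the statement. So I may assume $p^*>1$ and let $p\in(1,\infty)$ be its conjugate exponent, $1/p+1/p^*=1$.

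Next I would write $\sum_{k=1}^\ell x_k=\sum_{k=1}^\ell x_k\cdot 1$ and apply Hölder's inequality to the finite sequences $(x_k)_{k=1}^\ell$ and $(1)_{k=1}^\ell$ with exponents $p^*$ and $p$, obtaining
\[
\sum_{k=1}^\ell x_k\,\le\,\bigg(\sum_{k=1}^\ell x_k^{p^*}\bigg)^{1/p^*}\bigg(\sum_{k=1}^\ell 1\bigg)^{1/p}=\ell^{1/p}\bigg(\sum_{k=1}^\ell x_k^{p^*}\bigg)^{1/p^*}.
\]
Raising both (nonnegative) sides to the power $p^*$ and using the identity $p^*/p=p^*(1-1/p^*)=p^*-1$ then yields the asserted bound $\big(\sum_{k=1}^\ell x_k\big)^{p^*}\le\ell^{p^*-1}\sum_{k=1}^\ell x_k^{p^*}$. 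As an equally short alternative I could instead apply Jensen's inequality to the convex map $t\mapsto t^{p^*}$ and the uniform average $\frac1\ell\sum_{k=1}^\ell x_k$, and then clear the factor $\ell^{-p^*}$.

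Finally, for the equality claim only the sufficient direction is required: substituting $x_k\equiv x$ gives $(\ell x)^{p^*}=\ell^{p^*}x^{p^*}=\ell^{p^*-1}\cdot\ell\, x^{p^*}$, so equality indeed holds when all $x_k$ coincide. If one wanted the full characterization for $p^*>1$, the equality condition in Hölder's inequality would do it: since the second sequence is constant, equality forces $(x_k^{p^*})_k$ to be proportional to a constant, hence all $x_k$ equal.

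I would expect no genuine obstacle here, since the lemma is a textbook power-mean inequality; the only points requiring any care are the exponent bookkeeping $p^*/p=p^*-1$ and the harmless borderline case $p^*=1$.
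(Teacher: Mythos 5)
Your proposal is correct and follows essentially the same route as the paper: the authors likewise write $\sum_{k=1}^\ell x_k=\sum_{k=1}^\ell x_k\cdot 1$, apply H\"older's inequality with the conjugate pair $(p^*,p)$, and use $p^*/p=p^*-1$ to obtain the factor $\ell^{p^*-1}$. Your explicit handling of the borderline case $p^*=1$ and your verification of the equality claim are small additions the paper leaves implicit, but the core argument is identical.
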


\begin{proof}
We use H\"older's inequality and the fact that $p^\ast/p=p^\ast -1$ to obtain
\begin{eqnarray*}
\left(\sum_{k=1}^\ell x_k \right)^{p^\ast} & = &
\left(\sum_{k=1}^\ell x_k \cdot 1 \right)^{p^\ast}\\
& \le & \left( \left(\sum_{k=1}^\ell x_k^{p^\ast}\right)^{1/p^\ast}  \left(\sum_{k=1}^\ell 1^p\right)^{1/p}\right)^{p^\ast}\\
& = & \ell^{p^\ast/p}  \sum_{k=1}^\ell x_k^{p^\ast}\\
& = & \ell^{p^\ast-1}  \sum_{k=1}^\ell x_k^{p^\ast}.
\end{eqnarray*}
\end{proof}

\begin{proof}[Proof of Corollary~\ref{cor1}]
Consider first $p>1$ and hence $p^* < \infty$. According
to Theorem~\ref{thm1} and Lemma~\ref{le:sumprod} we have 
\begin{eqnarray}\label{prcor1:eq1}
{\rm error}(\QMC_{d,n};\calF_d) & = & 
\left[\sum_{\emptyset\not=\setu\in\setU_+}\gamma_\setu^{p^*}\,
  \int_{[0,1]^{|\setu|}}\left| \sum_{k=1}^{|\setu|} (-1)^k
  \sum_{\setv \subseteq \setu\atop |\setv|=k}
  \Delta_{\calP_{\setv}}(\bst_{\setv}) \prod_{i \in\setu\setminus \setv}
  t_i     \right|^{p^*}\rd\bst_\setu\right]^{1/p^*}\label{eq:4} \\
& \le & \left[\sum_{\emptyset\not=\setu\in\setU_+}\gamma_\setu^{p^*}\,
  \int_{[0,1]^{|\setu|}}\left( \sum_{\emptyset \not=\setv \subseteq \setu} |\Delta_{\calP_{\setv}}(\bst_{\setv})|  \prod_{i \in\setu\setminus \setv} t_i
      \right)^{p^*}\rd\bst_\setu\right]^{1/p^*} \nonumber\\
& \le & \left[\sum_{\emptyset\not=\setu\in\setU_+}\gamma_\setu^{p^*}\,
 2^{|\setu| (p^\ast -1)} \int_{[0,1]^{|\setu|}} \sum_{\emptyset \not=\setv \subseteq \setu} |\Delta_{\calP_{\setv}}(\bst_{\setv})|^{p^*}   \prod_{i\in\setu\setminus \setv} t_i^{p^*}    \rd\bst_\setu\right]^{1/p^*},\nonumber 
\end{eqnarray}
where we applied Lemma~\ref{le:easy} to the innermost sum. Interchanging
the integral and the inner sum gives
\begin{eqnarray*}
\lefteqn{{\rm error}(\QMC_{d,n};\calF_d)}\\ & \le & 
\left[\sum_{\emptyset\not=\setu\in\setU_+}\gamma_\setu^{p^*}\,
 2^{|\setu| (p^\ast -1)}  \sum_{\emptyset \not=\setv \subseteq \setu} \left(\int_{[0,1]^{|\setv|}} |\Delta_{\calP_{\setv}}(\bst_{\setv})|^{p^*} \rd \bst_{\setv}\right)  \left( \int_0^1 t^{p^*}\rd t\right)^{|\setu|-|\setv|} \right]^{1/p^*}\\
& = &   \left[\sum_{\emptyset\not=\setu\in\setU_+}\gamma_\setu^{p^*}\,
 2^{|\setu| (p^\ast -1)}  \sum_{\emptyset \not=\setv \subseteq \setu} (L_{p^\ast}(\calP_{\setv}))^{p^*}  \left( \frac{1}{p^*+1}\right)^{|\setu|-|\setv|} \right]^{1/p^*}\\
& = &  \left[\sum_{\emptyset\not=\setu\in\setU_+}\gamma_\setu^{p^*}\,
 \left(\frac{2^{p^\ast -1}}{p^*+1}\right)^{|\setu|}\,
   \sum_{\emptyset \not=\setv \subseteq \setu} (p^*+1)^{|\setv|}\,
   (L_{p^\ast}(\calP_{\setv}))^{p^*} \right]^{1/p^*}.
\end{eqnarray*}
Now we interchange the order of summation and obtain in this way 
\begin{eqnarray*}
{\rm error}(\QMC_{d,n};\calF_d) & \le &
\left[\sum_{\emptyset\not=\setv\in\setU_+} \left( (p^*+1)^{|\setv|}
\sum_{\setu\in\setU_+ \atop \setu \supseteq \setv} \gamma_\setu^{p^*}\,
 \left(\frac{2^{p^\ast -1}}{p^*+1}\right)^{|\setu|}\,
  \right)\,
   (L_{p^\ast}(\calP_{\setv}))^{p^*} \right]^{1/p^*}\\
   & = & \left[\sum_{\emptyset\not=\setv\in\setU_+} \widetilde{\gamma}_{\setv}^{p^\ast} \,
   (L_{p^\ast}(\calP_{\setv}))^{p^*} \right]^{1/p^*} \\
   & = & L_{p^\ast, \widetilde{\bsgamma}}(\calP).
\end{eqnarray*}

If $p=1$, and hence $p^*= \infty$, we trivially have 
\[
{\rm error}(\QMC_{d,n};\calF_d) \le
\max_{\emptyset\not=\setu\in\setU_+}\gamma_\setu\,
\sum_{\emptyset\not=\setv\subseteq\setu} L_\infty(\calP_\setv),
\]
and from this we obtain 
\begin{eqnarray*}
{\rm error}(\QMC_{d,n};\calF_d) & \le & \max_{\emptyset\not=\setu\in\setU_+}
\gamma_\setu\,2^{|\setu|}\, L_\infty(\calP_\setu) =
L_{\infty,\widetilde{\bsgamma}}(\calP).
\end{eqnarray*}
\end{proof}

\begin{remark}\rm
For product weights $\bsgamma_{\setu}=\prod_{j \in \setu} \gamma_j$
with a sequence $(\gamma_j)_{j \ge 1}$ of positive weights we have
$$
\widetilde{\gamma}_{\setu}=\left(\prod_{j \in \setu}
\frac{2 \gamma_j}{2^{1/p^\ast}}\right)  \prod_{j \in [d]\setminus\setu}
\left(1+ \frac{2^{p^\ast -1}}{p^*+1}\, \gamma_j^{p^*} 
\right)^{1/p^*} \quad\mbox{for}\quad p^*<\infty,
$$
and 
$$
\widetilde{\gamma}_{\setu}= \prod_{j \in \setu} (2 \gamma_j)
\quad\mbox{for}\quad p^*=\infty.
$$
\end{remark}

\begin{remark}\rm
To have small worst case error, one should use node sets with low
weighted $L_{p^*}$-discrepancy. These discrepancies 
have been well studied with
respect to both, the order of magnitude in $n$ as well as their dependence
on the dimension $d$. There are constructions of $n$-element point sets in
$[0,1)^d$ yielding a convergence rate of order $O((\log n)^{(d-1)/2}/n)$
if $p^* \in [1,\infty)$ and $O((\log n)^{d-1}/n)$ if $p^*=\infty$. 
Furthermore, conditions on the weights are known which guarantee
various kinds of tractability for the weighted discrepancy and hence for the 
corresponding integration problem in the ANOVA space. For information
see, for example, \cite{DP2010,DP2015,LP2003,NW2010} and the references therein.
\end{remark}

\subsection{A general lower bound for the worst case error} 
We now provide  the following general lower bound.

\begin{theorem}\label{thm:lbddimgen}
Assume that $\gamma_{\setu}>0$ for every $\emptyset \not=\setu \subseteq[d]$.
Then for every $p \in [1,\infty)$ there exists a positive
constant $c=c(p^\ast,d,\bsgamma)$ such that any QMC rule
based on an $n$-element point set in $[0,1)^d$ has the worst case error
bounded from below by
\begin{equation}\label{lbd:errDd}
{\rm error}(\QMC_{d,n};\calF_d) \ge c\, 
\frac{(\ln n)^{\frac{d-1}{2}}}{n}.
\end{equation}
For $p=\infty$ we have 
\begin{equation}\label{lbd:errDdpstern1}
{\rm error}(\QMC_{d,n};\calF_d) \ge c\, 
\frac{(\ln n)^{\frac{1}{2}}}{n}.
\end{equation}
For $d=2$ and $p=1$ the lower bound \eqref{lbd:errDd} can be improved to 
\begin{equation}\label{lbd:errD2infty}
{\rm error}(\QMC_{2,n};\calF_2) \ge c\, 
\frac{\ln n}{n}.
\end{equation}

\end{theorem}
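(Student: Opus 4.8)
The plan is to reduce the worst-case error in the ANOVA space to a single term of a classical (unweighted) $L_{p^\ast}$-discrepancy of a projection of the node set, and then to quote the known discrepancy lower bounds, choosing the projection dimension according to the range of $p^\ast$. The key enabling observation is that the hypothesis $\gamma_\setu>0$ for every nonempty $\setu\subseteq[d]$ makes Condition~\eqref{condweimset} trivially true, which puts the whole machinery of Remark~\ref{re:altappr} at my disposal.

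First I would note that, since \eqref{condweimset} holds, by \cite[Proposition~13]{HeRiWa2016} the spaces $\calF_d$ and $\calF_d^{\bspitch}$ coincide as sets of functions, so the identity embedding $\imath$ and its inverse are bounded with norms $\|\imath\|,\|\imath^{-1}\|<\infty$ depending only on $d,\bsgamma,p^\ast$. Remark~\ref{re:altappr} then gives directly
\[
{\rm error}(\QMC_{d,n};\calF_d)\ \ge\ \frac{1}{\|\imath^{-1}\|}\,{\rm error}(\QMC_{d,n};\calF_d^{\bspitch})\ =\ \frac{1}{\|\imath^{-1}\|}\,L_{p^\ast,\bsgamma}(\overline{\calP}),
\]
where $\overline{\calP}=\{\bsone-\bst_j\}$ is again an $n$-point set in $[0,1)^d$. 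Since all weights are positive, I may discard all but one summand of the weighted discrepancy: for any nonempty $\setw\subseteq[d]$,
\[
L_{p^\ast,\bsgamma}(\overline{\calP})\ \ge\ \gamma_{\setw}\,\big\|\Delta_{\overline{\calP}_{\setw}}\big\|_{L_{p^\ast}([0,1]^{|\setw|})},
\]
which is $\gamma_\setw$ times an ordinary $L_{p^\ast}$-discrepancy of the $|\setw|$-dimensional projection of $\overline{\calP}$.

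It then remains to select $\setw$ and invoke the appropriate classical lower bound, all of which hold for arbitrary $n$-point configurations (see \cite{DP2010,NW2010} and the references therein). For $p\in[1,\infty)$, i.e.\ $p^\ast\in(1,\infty]$, I take $\setw=[d]$ and use Roth's bound $\|\Delta_{\overline{\calP}}\|_{L_2}\ge c_d(\ln n)^{(d-1)/2}/n$, combined with $\|\cdot\|_{L_{p^\ast}}\ge\|\cdot\|_{L_2}$ for $p^\ast\ge2$ and the Schmidt extension for $1<p^\ast<2$; this yields \eqref{lbd:errDd}. For $p=\infty$, i.e.\ $p^\ast=1$, I take any two-element $\setw=\{a,b\}$ (so $d\ge2$) and apply Hal\'asz's two-dimensional $L_1$-bound $\|\Delta_{\overline{\calP}_{\setw}}\|_{L_1([0,1]^2)}\ge c(\ln n)^{1/2}/n$, whose dimension independence is exactly what produces \eqref{lbd:errDdpstern1}. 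Finally, for $d=2$ and $p=1$, i.e.\ $p^\ast=\infty$, I take $\setw=[2]$ and invoke Schmidt's two-dimensional star-discrepancy bound $\|\Delta_{\overline{\calP}}\|_{L_\infty([0,1]^2)}\ge c\,\ln n/n$, improving \eqref{lbd:errDd} to \eqref{lbd:errD2infty}. In each case the constant $c(p^\ast,d,\bsgamma)$ is the product of the classical constant, the weight $\gamma_\setw$, and $1/\|\imath^{-1}\|$.

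The genuinely delicate point is hidden in the first reduction. A purely direct argument, in the spirit of the rest of the paper, would instead start from Theorem~\ref{thm1}, retain only the term $\setu=[d]$, and write via Lemma~\ref{le:sumprod} $\tfrac1n\sum_{j=1}^n K_{[d]}(\bsx_j,\cdot)=(-1)^d\Delta_{\calP}+R$, where $R$ gathers the lower-dimensional discrepancies $\Delta_{\calP_\setv}$ multiplied by the polynomials $\prod_{i\notin\setv}t_i$. One would then have to show that these correction terms cannot cancel the leading discrepancy; this requires pairing with Roth's auxiliary Haar function and verifying that, on genuinely $d$-dimensional fine-scale Haar coefficients, the contribution of $R$ is of strictly smaller order than that of $\Delta_{\calP}$ (the ratio being essentially $2^{-\sum_{i\notin\setv}r_i}$ over the dyadic levels $r_i$). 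This is where the real work would lie. Passing through the embedding as above sidesteps the correction terms entirely, at the harmless cost of the implicit constant $\|\imath^{-1}\|$, and is the route I would adopt.
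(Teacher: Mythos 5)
Your proposal is correct in substance, but it takes a genuinely different route from the paper. You pass through the anchored space: under Condition~\eqref{condweimset} the embedding $\imath:\calF_d\to\calF_d^{\bspitch}$ has a bounded inverse, the Sloan--Wo\'{z}niakowski identity gives ${\rm error}(\QMC_{d,n};\calF_d^{\bspitch})=L_{p^\ast,\bsgamma}(\overline{\calP})$, and since the weighted discrepancy is an $\ell_{p^\ast}$-combination (or maximum) of \emph{nonnegative} projection discrepancies, you may simply discard all terms but one and quote Roth, Schmidt and Hal\'asz. The paper instead stays inside the ANOVA space, as announced in Section~\ref{sec:QMC}: it starts from the exact error formula \eqref{eq:lbd_formula} (Theorem~\ref{thm1} combined with Lemma~\ref{le:sumprod}), in which the term for $\setu$ is not $\gamma_\setu L_{p^\ast}(\calP_\setu)$ but a signed mixture of all sub-projection discrepancies $\Delta_{\calP_\setv}$, $\setv\subseteq\setu$, so possible cancellation must be excluded. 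The paper does this with a threshold argument: setting $T_d(\ell)=(\ell!)^2/(d!)^2$, one chooses the minimal $\ell$ for which some projection with $|\setu|=\ell$ exceeds the threshold $T_d(\ell)\,C(d,p^\ast)(\ln n)^{(d-1)/2}/n$ while all proper nonempty sub-projections stay below theirs; the reverse triangle inequality $\|\Delta_{\calP_\setu}-g_\setu\|_{L_{p^\ast}}\ge \big|\,\|\Delta_{\calP_\setu}\|_{L_{p^\ast}}-\|g_\setu\|_{L_{p^\ast}}\big|$ together with Lemma~\ref{lem:T_ell}, i.e.\ $T_d(\ell)>\sum_{k=1}^{\ell-1}\binom{\ell}{k}T_d(k)$, then shows the correction term cannot swallow the leading discrepancy. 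So your closing paragraph correctly diagnoses where the real work of a direct proof lies, though the paper's actual device is this elementary $T_d(\ell)$ recursion, not the Haar-coefficient pairing you sketch. What each route buys: yours is much shorter and outsources the difficulty to the embedding literature \cite{GHHRW2017,HeRiWa2016}, at the cost of the factor $1/\|\imath^{-1}\|$ (possibly huge in $d$, but admissible here) and of being precisely the embedding approach the paper is structured to avoid; the paper's argument is self-contained, yields explicit constants (e.g.\ $C\min_{\setu\neq\emptyset}\gamma_\setu/2^{|\setu|}$ in the $d=2$, $p^\ast=\infty$ case), and demonstrates that the direct formula of Theorem~\ref{thm1} already suffices for lower bounds.

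One small repair is needed: Condition~\eqref{condweimset} ranges over all $\setv\subseteq\setu$ including $\setv=\emptyset$, so it forces $\gamma_\emptyset>0$, which the theorem does not assume; your claim that the hypothesis makes \eqref{condweimset} ``trivially true'' is therefore not quite accurate as stated. The fix is one sentence: by Theorem~\ref{thm1} the worst case error does not involve $\gamma_\emptyset$ at all, so you may set $\gamma_\emptyset=1$ without loss of generality before invoking the embedding. Also note, as you implicitly do by requiring $d\ge2$ for the choice $\setw=\{a,b\}$, that \eqref{lbd:errDdpstern1} is meaningful only for $d\ge2$, matching the scope of the Hal\'asz-type bound \eqref{lbd:Lpdisc1}.
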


For the proof we need the following technical lemma.

\begin{lemma}\label{lem:T_ell}
 Let $d\in\NN$, $d\ge 2$, and let $T_d (\ell)$ for $\ell\in [d]$ be defined by 
 \[
  T_d (\ell):= \frac{(\ell !)^2}{(d !)^2}.
 \]
Then it is true for every $\ell\in\{2,\ldots,d\}$ that 
\[
 T_d (\ell) > \sum_{k=1}^{\ell-1} {\ell \choose k} T_d (k).
\]
\end{lemma}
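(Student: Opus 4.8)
The plan is to first strip away the irrelevant $d$-dependence. The factor $1/(d!)^2$ occurs in $T_d(\ell)$ and in every summand $T_d(k)$ on the right-hand side, so it cancels entirely. Multiplying the asserted inequality through by $(d!)^2$, it becomes
\[(\ell!)^2 > \sum_{k=1}^{\ell-1}\binom{\ell}{k}(k!)^2,\]
which no longer involves $d$ at all. Dividing by $\ell!$ and using the identity $\binom{\ell}{k}(k!)^2/\ell! = k!/(\ell-k)!$, I would reduce the whole lemma to proving, for every integer $\ell\ge 2$,
\[\ell! \;>\; S_\ell:=\sum_{k=1}^{\ell-1}\frac{k!}{(\ell-k)!}.\]
The gain from this reduction is that $S_\ell$ is a bare sum whose terms I can compare directly.

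Next I would analyze the summands $b_k:=k!/(\ell-k)!$. The largest of these is the last one, $b_{\ell-1}=(\ell-1)!$, and the point is that the $b_k$ decay at least geometrically as the index decreases from $\ell-1$. Concretely, for $2\le k\le \ell-1$ the ratio of consecutive terms is
\[\frac{b_{k-1}}{b_k}=\frac{(k-1)!}{(\ell-k+1)!}\cdot\frac{(\ell-k)!}{k!}=\frac{1}{k(\ell-k+1)},\]
and since $k\ge 2$ and $\ell-k+1\ge 2$ on this range, we get $k(\ell-k+1)\ge 2(\ell-1)\ge 2$, whence $b_{k-1}/b_k\le \tfrac12$. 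Iterating yields $b_{\ell-1-m}\le (\ell-1)!\,2^{-m}$ for $0\le m\le \ell-2$, so summing the (truncated) geometric series gives
\[S_\ell=\sum_{m=0}^{\ell-2}b_{\ell-1-m}\le (\ell-1)!\sum_{m=0}^{\ell-2}2^{-m}<2\,(\ell-1)!\le \ell!,\]
the final inequality being exactly where the hypothesis $\ell\ge 2$ enters. Tracing back through the two equivalences above then delivers the claim.

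I do not expect a genuine obstacle here once the cancellation of $(d!)^2$ is noticed; the work is entirely elementary. The only points that require a little care are confirming that the ratio bound $b_{k-1}/b_k\le\tfrac12$ holds \emph{uniformly} across the summation range — in particular checking the endpoint $k=\ell-1$, which is where $k(\ell-k+1)$ attains its minimum $2(\ell-1)$ — and keeping the inequality strict in the boundary case $\ell=2$. In that case $2(\ell-1)!=\ell!$, so strictness of $\ell!>S_\ell$ comes not from the last step but from the strict inequality $S_\ell<2(\ell-1)!$, which holds because the finite geometric sum $\sum_{m=0}^{\ell-2}2^{-m}=2-2^{-(\ell-2)}$ is strictly below $2$.
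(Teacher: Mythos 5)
Your proof is correct, and it takes a genuinely different route from the paper's. After the common first step of cancelling $(d!)^2$, which reduces the lemma to $(\ell!)^2 > \sum_{k=1}^{\ell-1}\binom{\ell}{k}(k!)^2$, the paper argues by induction on $\ell$: it sets $s(\ell)=\sum_{k=1}^{\ell-1}\binom{\ell}{k}(k!)^2$, uses the identity $\binom{\ell+1}{k}=\frac{\ell+1}{k}\binom{\ell}{k-1}$ together with an index shift to obtain $s(\ell+1)\le \ell+1+(\ell+1)\,\ell\, s(\ell)$, and closes the induction via $(\ell+1)^2(\ell!)^2=((\ell+1)!)^2$. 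You instead normalize by $\ell!$ (using $\binom{\ell}{k}(k!)^2/\ell!=k!/(\ell-k)!$) and prove $\ell!>S_\ell=\sum_{k=1}^{\ell-1}k!/(\ell-k)!$ directly, by observing that the summands $b_k$ decay at least geometrically below the top term $b_{\ell-1}=(\ell-1)!$ and summing the truncated geometric series. Your route is non-inductive and buys a quantitatively stronger conclusion, $S_\ell<2(\ell-1)!$, i.e.\ $\sum_{k=1}^{\ell-1}\binom{\ell}{k}(k!)^2<2\,\ell!\,(\ell-1)!$, which is strictly sharper than the paper's bound $(\ell!)^2$ for every $\ell\ge 3$; the paper's induction, in exchange, avoids any term-by-term ratio analysis. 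One cosmetic point: your stated justification of the ratio bound is slightly misaligned --- from $k\ge 2$ and $\ell-k+1\ge 2$ one gets directly $k(\ell-k+1)\ge 4$, not $\ge 2(\ell-1)$ (the latter is also true, but by checking the endpoints of the concave map $k\mapsto k(\ell-k+1)$ on $[2,\ell-1]$, not by multiplying the two lower bounds). Since all you need is $k(\ell-k+1)\ge 2$ to conclude $b_{k-1}/b_k\le \tfrac12$, this is harmless, and your treatment of the boundary case $\ell=2$ (empty ratio range, strictness supplied by $\sum_{m=0}^{\ell-2}2^{-m}=2-2^{-(\ell-2)}<2$) is exactly right.
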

\begin{proof}
Let, for $\ell \in [d]$,
 \[
  s(\ell):= \sum_{k=1}^{\ell-1} {\ell \choose k} (k!)^2.
 \]
Showing the desired inequality is equivalent to showing that
$s(\ell) < (\ell !)^2$ for all $\ell \in\{2,\ldots,d\}$.
This is done by induction on $\ell$. It is easily checked that the assertion
holds for $\ell=1,2$.
Assume that we have $s(\ell) < (\ell!)^2$. Now we consider $s(\ell+1)$.
We have
\begin{eqnarray*}
s(\ell+1) & = & \sum_{k=1}^{\ell} {\ell+1 \choose k} (k!)^2 \\
& = & \sum_{k=1}^{\ell}\frac{\ell +1}{k} {\ell \choose k-1} (k!)^2 \\
& = & \sum_{k=0}^{\ell-1} \frac{\ell +1}{k+1} {\ell \choose k} ((k+1)!)^2\\
& = & \ell+1 + \sum_{k=1}^{\ell -1} {\ell \choose k} (k!)^2 (\ell+1) (k+1)\\
& \le & \ell +1 + (\ell+1)\, \ell\, s(\ell)\\
& < &  \ell +1 + (\ell+1)\, \ell\, (\ell!)^2\\
& \le & (\ell+1)^2 (\ell!)^2\\
& = & ((\ell+1)!)^2.
\end{eqnarray*}
This completes the proof.
\end{proof}

\begin{proof}[Proof of Theorem~\ref{thm:lbddimgen}]
Note that for every $p^\ast \in (1,\infty]$ there exists a $C=C(d,p^\ast)>0$
such that for every $n$-element point set $\calP$ in $[0,1)^d$ we
have 
\begin{equation}\label{lbd:Lpdisc}
L_{p^\ast}(\calP) \ge C(d,p^\ast) \, \frac{(\ln n)^{\frac{d-1}{2}}}{n}.
\end{equation}
For $p^* \ge 2$ this is a famous result by Roth~\cite{roth1954} that was extended later by Schmid~\cite{schm1977} to the case $p^* \in (1,2)$. For $p^*=1$ we always have
\begin{equation}\label{lbd:Lpdisc1}
L_{1}(\calP) \ge C(d,p^\ast) \, \frac{(\ln n)^{\frac{1}{2}}}{n},
\end{equation}
as shown by Hal\'{a}sz~\cite{hal1981} for $d=2$, but the result holds for all $d \in \NN$ (cf.~\cite{ABL2014}). 
For $d=2$ and  $p^*=\infty$ the lower bound \eqref{lbd:Lpdisc} can, according to Schmidt~\cite{schm1972}, be tightened to
\begin{equation}\label{lbd:Linftydisc}
L_{\infty}(\calP) \ge C(2,\infty) \, \frac{\ln n}{n}.
\end{equation}

We now use the sequence $T_d(\ell)$ for $\ell\in [d]$
from  Lemma~\ref{lem:T_ell}, 
$T_d (\ell):= (\ell !)^2 / (d !)^2$.

According to \eqref{prcor1:eq1}, we have for $p>1$, and hence $p^* < \infty$,  
\begin{equation}\label{eq:lbd_formula}
{\rm error}(\QMC_{d,n};\calF_d)  = 
\left[\sum_{\emptyset\not=\setu \subseteq [d]}\gamma_\setu^{p^*}\,
\int_{[0,1]^{|\setu|}}\left| 
\sum_{k=1}^{|\setu|} (-1)^k \sum_{\setv \subseteq \setu\atop |\setv|=k} \Delta_{\calP_{\setv}}(\bst_{\setv})  
\prod_{i \in\setu\setminus \setv} t_i \right|^{p^*}\rd\bst_\setu\right]^{1/p^*}.
\end{equation}

Assume that $p < \infty$ and hence $p^*>1$. Suppose first that 
\begin{equation}\label{eq:lbd_dim1}
 L_ {p^{\ast}}(\calP_{\{i\}}) \ge T_d(1) C(d,p^\ast) \, \frac{(\ln n)^{\frac{d-1}{2}}}{n}
\end{equation}
for some $i\in [d]$. Then we obtain from \eqref{prcor1:eq1} or \eqref{eq:lbd_formula}, respectively, that 
\[
{\rm error}(\QMC_{d,n};\calF_d) \ge \gamma_{\{i\}}L_ {p^{\ast}}(\calP_{\{i\}}) \ge \gamma_{\{i\}} 
T_d(1) C(d,p^\ast) \, \frac{(\ln n)^{\frac{d-1}{2}}}{n}.
\]

If \eqref{eq:lbd_dim1} does not hold for any $i\in[d]$, let $\ell\in \{2,\ldots,d\}$ be minimal such that the following two 
conditions hold:
\begin{itemize}
 \item[(i)] There exists $\setu \subseteq [d]$, $\setu\neq \emptyset$, with $\abs{\setu}=\ell$ such that
 \[
  L_{p^{\ast}} (\calP_{\setu}) \ge T_d(\ell) C(d,p^\ast) \, \frac{(\ln n)^{\frac{d-1}{2}}}{n},
 \]
 \item[(ii)] but
 \[
  L_{p^{\ast}} (\calP_{\setv}) < T_d(\abs{\setv}) C(d,p^\ast) \, \frac{(\ln n)^{\frac{d-1}{2}}}{n}
 \]
 for all $\setv \subsetneq \setu$, $\setv\neq \emptyset$.
\end{itemize}
Note that $\ell\ge 2$ since we assumed that \eqref{eq:lbd_dim1} does not hold for any $i\in[d]$, and $\ell\le d$ due 
to the fact that \eqref{lbd:Lpdisc} holds and $T_d (d)=1$. 

Let now $\ell$ be defined as above, and let $\setu \subseteq [d]$ be such that Condition (i) holds. Then it follows from 
\eqref{eq:lbd_formula} that
\begin{eqnarray*}
{\rm error}(\QMC_{d,n};\calF_d) &\ge& 
\left[\gamma_\setu^{p^*}\,
\int_{[0,1]^{\ell}}  \abs{ (-1)^{\ell} \Delta_{\calP_{\setu}} (\bst_{\setu}) + 
 \sum_{k=1}^{\ell -1 } (-1)^k \sum_{\setv \subseteq \setu\atop |\setv|=k} \Delta_{\calP_{\setv}}(\bst_{\setv})  
 \prod_{i \in\setu\setminus \setv} t_i }^{p^*}\rd\bst_{\setu}\right]^{1/p^*}\\
 &=& 
\left[\gamma_\setu^{p^*}\,
\int_{[0,1]^{\ell}}  \abs{  \Delta_{\calP_{\setu}} (\bst_{\setu}) - (-1)^{\ell-1} 
 \sum_{k=1}^{\ell -1 } (-1)^k \sum_{\setv \subseteq \setu\atop |\setv|=k} \Delta_{\calP_{\setv}}(\bst_{\setv})  
 \prod_{i \in\setu\setminus \setv} t_i }^{p^*}\rd\bst_{\setu}\right]^{1/p^*}.
\end{eqnarray*}
Let now 
\[
 g_{\setu} (\bst_{\setu}):= (-1)^{\ell-1} 
 \sum_{k=1}^{\ell -1 } (-1)^k \sum_{\setv \subseteq \setu\atop |\setv|=k} \Delta_{\calP_{\setv}}(\bst_{\setv})  
 \prod_{i \in\setu\setminus \setv} t_i 
 \quad\quad \mbox{for}\quad \bst_{\setu}\in [0,1]^{\ell}.
\]
Then,
\begin{eqnarray*}
 {\rm error}(\QMC_{d,n};\calF_d) & \ge &  \gamma_{\setu} 
 \left[ \int_{[0,1]^d} \left|\Delta_{\calP_{\setu}}(\bst_{\setu})-g_{\setu}(\bst_{\setu})\right|^{p^\ast} \rd \bst_{\setu}\right]^{1/p^\ast}\\
 & = & \gamma_{\setu} \|\Delta_{\calP_{\setu}} - g_{\setu}\|_{L_{p^\ast}}\\
 & \ge & \gamma_{\setu} \left|\|\Delta_{\calP_{\setu}}\|_{L_{p^\ast}} - \|g_{\setu}\|_{L_{p^\ast}} \right|\\
 & = & \gamma_{\setu} \left|L_{p^\ast}(\calP_{\setu}) - \|g_{\setu}\|_{L_{p^\ast}} \right|.
\end{eqnarray*}

However,
\begin{eqnarray*}
 \|g_{\setu}\|_{L_{p^\ast}} &\le &\sum_{k=1}^{\ell-1} \sum_{\setv \subseteq \setu\atop |\setv|=k}  
 \norm{\left(\prod_{i \in\setu\setminus \setv} t_i\right)  \Delta_{\calP_{\setv}}(\bst_{\setv})}_{L_{p^\ast}}\\
 &\le& \sum_{k=1}^{\ell-1} \sum_{\setv \subseteq \setu\atop |\setv|=k}
  \left(\frac{1}{(p^\ast +1)^{1/p^\ast}}\right)^{\ell - k} \norm{\Delta_{\calP_{\setv}}(\bst_{\setv})}_{L_{p^\ast}}\\
 &\le& \sum_{k=1}^{\ell-1} \sum_{\setv \subseteq \setu\atop |\setv|=k} 
 L_{p^{\ast}} (\calP_{\setv})\\
 &\le& \sum_{k=1}^{\ell-1} {\ell \choose k} T_d(k)\, C(d,p^\ast) \, \frac{(\ln n)^{\frac{d-1}{2}}}{n}\\
 &=&C(d,p^\ast) \, \frac{(\ln n)^{\frac{d-1}{2}}}{n}\sum_{k=1}^{\ell-1} {\ell \choose k} T_d(k).
\end{eqnarray*}
This yields 
\begin{eqnarray*}
 {\rm error}(\QMC_{d,n};\calF_d) 
 & \ge & \gamma_{\setu} \left|L_{p^\ast}(\calP_{\setu}) - \|g_{\setu}\|_{L_{p^\ast}} \right|\\
 & \ge &C(d,p^\ast) \, \frac{(\ln n)^{\frac{d-1}{2}}}{n} \left(T_d (\ell) - \sum_{k=1}^{\ell-1} {\ell \choose k} T_d(k)\right)\\
 & = &C(d,p^\ast) \, \frac{(\ln n)^{\frac{d-1}{2}}}{n} c_{\ell},
\end{eqnarray*}
where $c_{\ell}>0$ by Lemma~\ref{lem:T_ell}.
The same proof idea with some obvious modifications also works for $p=1$
and hence $p^*=\infty$. Therefore \eqref{lbd:errDd} is completely proven.
Furthermore, the same proof but with all terms $(\ln n)^{\frac{d-1}{2}}$
replaced by $(\ln n)^{\frac{1}{2}}$ works for \eqref{lbd:errDdpstern1}
($p=\infty$ and $p^*=1$).

It remains to prove \eqref{lbd:errD2infty}. For $d=2$ and $p=1$
(i.e., $p^*=\infty$)  we have 
\begin{eqnarray}\label{lbdd2:eq1infty}
\lefteqn{{\rm error}(\QMC_{2,n};\calF_2)}\nonumber\\
  & = & \max\left\{ \gamma_{\{1\}} L_{\infty}(\calP_{\{1\}}),
\gamma_{\{2\}} L_{\infty}(\calP_{\{2\}}),\gamma_{\{1,2\}}
\sup_{(t_1,t_2) \in [0,1]^2} |\Delta_{\calP}(t_1,t_2) - g(t_1,t_2)|\right\},
\end{eqnarray}
where $$g(t_1,t_2):=t_1 \Delta_{\calP_{\{2\}}}(t_2)+ t_2 \Delta_{\calP_{\{1\}}}(t_1).$$

Let $C=C(2,\infty)$ from \eqref{lbd:Linftydisc}. Now we consider two cases:
\begin{description}
\item[Case 1:] $L_ {\infty}(\calP_{\{i\}}) \ge \tfrac{C}{4} \tfrac{\ln n}{n}$ for at least one $i \in [2]$, say for $i=1$. Then we obtain from \eqref{lbdd2:eq1infty} that
\[{\rm error}(\QMC_{2,n};\calF_2) \ge \gamma_{\{1\}}L_ {\infty}(\calP_{\{1\}}) \ge \gamma_{\{1\}} \frac{C}{4} \frac{\ln n}{n}.
\]
 \item[Case 2:] $L_ {p^{\ast}}(\calP_{\{i\}}) < \tfrac{C}{4}  \tfrac{\ln n}{n}$ for all $i\in [2]$. Then  we obtain from \eqref{lbdd2:eq1infty} that 
 \begin{eqnarray*}
{\rm error}(\QMC_{2,n};\calF_2) & \ge &  \gamma_{\{1,2\}} \sup_{(t_1,t_2)\in [0,1]^2} \left|\Delta_{\calP}(t_1,t_2)-g(t_1,t_2)\right|\\
& = & \gamma_{\{1,2\}} \|\Delta_{\calP} - g\|_{L_{\infty}}\\
& \ge & \gamma_{\{1,2\}} \left|\|\Delta_{\calP}\|_{L_{\infty}} - \|g\|_{L_{\infty}} \right|\\
& = & \gamma_{\{1,2\}} \left|L_{\infty}(\calP) - \|g\|_{L_{\infty}} \right|.
\end{eqnarray*}
We have 
\begin{eqnarray*}
 \|g\|_{L_{\infty}} & \le & \|t_1 \Delta_{\calP_{\{2\}}}(t_2)\|_{L_{\infty}} + \|t_2 \Delta_{\calP_{\{1\}}}(t_1)\|_{L_{\infty}}\\
 & = &  \left(L_{\infty}(\calP_{\{1\}})+L_{\infty}(\calP_{\{2\}})\right)<\frac{C}{2} \frac{\ln n}{n}.
\end{eqnarray*}
Hence, together with \eqref{lbd:Linftydisc}, we get
\[{\rm error}(\QMC_{2,n};\calF_2) >  \gamma_{\{1,2\}}
\frac{C}{2} \frac{\ln n}{n}.
\]
\end{description}
In any case we have
\[{\rm error}(\QMC_{2,n};\calF_2) >  c(p^\ast,\bsgamma) \,
\frac{\ln n}{n},
\]
where we can choose
\[c(p^\ast,\bsgamma)=C\,\min_{\setu \not=\emptyset}
\frac{\gamma_{\setu}}{2^{|\setu|}},
\]
with $C$ taken from \eqref{lbd:Linftydisc}.
\end{proof}

\section{Low-dimensional cases}\label{sec:lowD}

In this section we study the special low-dimensional
cases $d \in \{1,2\}$ in greater detail and provide optimal QMC rules.

\subsection{The case 1D}\label{sec:1D}

From \eqref{prcor1:eq1} we have: 

\begin{corollary} For any $n$-element point set $\calP$ in
$[0,1)$, the error of the corresponding QMC method is 
(modulo $\gamma_{\{1\}}$) the $L_{p^*}$-discrepancy of $\calP$,
\[  {\rm error}(\QMC_{1,n},\calF_1)=\gamma_{\{1\}}\,L_{p^*}(\calP).
\]
\end{corollary}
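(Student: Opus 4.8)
The plan is to specialize Theorem~\ref{thm1} (equivalently formula \eqref{prcor1:eq1}) to the case $d=1$, where the combinatorics collapse almost entirely. When $d=1$ the only nonempty subset of $[d]=\{1\}$ is $\setu=\{1\}$ itself, and we may assume $\gamma_{\{1\}}>0$ (otherwise the statement is trivial). So the outer sum over $\emptyset\neq\setu\in\setU_+$ has exactly one term, and the claimed identity is really just a one-line instantiation of the general worst-case error formula once we identify the integrand.

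First I would write out formula \eqref{prcor1:eq1} for $\setu=\{1\}$. The inner sum $\sum_{k=1}^{|\setu|}(-1)^k\sum_{|\setv|=k}\Delta_{\calP_\setv}(\bst_\setv)\prod_{i\in\setu\setminus\setv}t_i$ runs only over $k=1$, forcing $\setv=\{1\}=\setu$, so that $\setu\setminus\setv=\emptyset$ and the empty product equals $1$. Hence the integrand reduces to $(-1)^1\Delta_{\calP_{\{1\}}}(t_1)=-\Delta_{\calP}(t_1)$, and its $L_{p^*}$ norm is insensitive to the sign. Plugging this into \eqref{prcor1:eq1} gives
\[
  {\rm error}(\QMC_{1,n};\calF_1)=\left[\gamma_{\{1\}}^{p^*}\int_{[0,1]}\left|\Delta_{\calP}(t_1)\right|^{p^*}\rd t_1\right]^{1/p^*}=\gamma_{\{1\}}\,\|\Delta_{\calP}\|_{L_{p^*}([0,1])}=\gamma_{\{1\}}\,L_{p^*}(\calP),
\]
where the last equality is just the definition of the (unweighted) $L_{p^*}$-discrepancy from Section~\ref{def:disc}. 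The case $p^*=\infty$ is identical, using the max-form of Theorem~\ref{thm1}: the maximum is over the single set $\setu=\{1\}$, and one reads off $\gamma_{\{1\}}\|\Delta_\calP\|_{L_\infty([0,1])}=\gamma_{\{1\}}L_\infty(\calP)$.

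Since this is a direct corollary, there is essentially no obstacle: the only thing to verify carefully is the bookkeeping that collapses the alternating double sum in Lemma~\ref{le:sumprod} to a single term with coefficient $-1$, together with the observation that the overall sign is immaterial inside an $L_{p^*}$ norm. I would therefore keep the proof to the two displayed lines above, noting explicitly that the sum over $\setu$ and the inner sum over $\setv$ each degenerate to their unique term when $d=1$.
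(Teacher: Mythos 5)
Your proof is correct and follows the paper's own route exactly: the paper derives this corollary simply by specializing formula \eqref{prcor1:eq1} (i.e., Theorem~\ref{thm1} combined with Lemma~\ref{le:sumprod}) to $d=1$, where the sums over $\setu$ and $\setv$ collapse to the single term $\setu=\setv=\{1\}$ and the sign $(-1)^1$ disappears inside the $L_{p^*}$-norm, just as you argue. Your explicit bookkeeping (including the $p^*=\infty$ case and the trivial case $\gamma_{\{1\}}=0$) is a faithful expansion of the paper's one-line justification ``From \eqref{prcor1:eq1} we have.''
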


Next we show that the composite midpoint rule 
\[\QMC^{\rm MP}_{1,n}(f)=\frac1n\,\sum_{j=1}^nf(x_j)\quad\mbox{with}\quad
x_j=\frac{2j-1}{2\,n}
\]
is optimal among all $\QMC$ rules based on $n$ nodes in $[0,1]$.
This is equivalent to the fact that the point set formed by 
\begin{equation}\label{def:xMPR}
y_j=\frac{2j-1}{2n}\ \ \mbox{for $j=1,\ldots,n$}
\end{equation}
has optimal $L_{p^\ast}$-discrepancy among all $n$-element point
sets in $[0,1)$. The latter is well known for
$p^\ast \in \{2,\infty\}$, see \cite{nie1973}, 
and has been shown recently in \cite{KPa19} for arbitrary
$p^\ast \in [1,\infty]$. Here we 
give an elementary proof
for the case of general $p^\ast \in [1,\infty]$.

We begin with the following lemma.

\begin{lemma}\label{lem:simp}
For any $p^* \in [1,\infty]$, if an $n$-element point set
$\calP=\{x_1,\dots,x_n\}$ has the least 
$L_{p^*}$-discrepancy, then each subinterval $[\tfrac{j-1}n,\tfrac{j}n)$, $j =1,\ldots, n$,
contains exactly one point from $\calP$.
\end{lemma}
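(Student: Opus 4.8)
The plan is to prove the contrapositive: I want to show that if some subinterval $[\tfrac{j-1}{n},\tfrac{j}{n})$ contains two or more points of $\calP$, then (by a pigeonhole argument) some other subinterval contains no point, and from this configuration I can construct a competitor point set with strictly smaller $L_{p^*}$-discrepancy. This will contradict the assumption that $\calP$ has least $L_{p^*}$-discrepancy, establishing the claim. Since there are $n$ points and $n$ subintervals of equal length $1/n$, as soon as one subinterval holds at least two points, some subinterval $[\tfrac{k-1}{n},\tfrac{k}{n})$ must be empty.

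First I would recall the structure of the local discrepancy function in one dimension. For an ordered point set $x_1\le \cdots \le x_n$ in $[0,1)$, the function $\Delta_{\calP}(t)=\tfrac1n\#\{j:x_j<t\}-t$ is piecewise linear in $t$, with slope $-1$ between consecutive points and an upward jump of $1/n$ at each point. The key monotonicity fact I would exploit is that moving a single point does not affect $\Delta_{\calP}$ outside the interval spanned by the old and new positions, so the effect of a relocation is local and can be analyzed on one subinterval at a time.

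The main step is the relocation argument. Suppose $[\tfrac{j-1}{n},\tfrac{j}{n})$ contains (at least) two points while $[\tfrac{k-1}{n},\tfrac{k}{n})$ is empty. I would take one of the points in the overcrowded interval and slide it into the empty interval (or, more cautiously, slide it toward the empty interval until the crowding is relieved). I expect that on the region affected, this relocation reduces $|\Delta_{\calP}(t)|$ pointwise, or at least does not increase it anywhere while strictly decreasing it on a set of positive measure. The cleanest way to see this is that an empty subinterval forces $\Delta_{\calP}$ to drift away from zero on that interval (the count lags behind the volume), while an overcrowded subinterval forces it away from zero in the opposite direction; redistributing a point equalizes the counts and pulls the local discrepancy function toward zero in both affected regions simultaneously. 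Monotonicity of the $L_{p^*}$-norm under pointwise reduction of $|\Delta_{\calP}|$ then gives a strict decrease for every $p^*\in[1,\infty]$, including the endpoint cases, which is exactly what lets the argument work uniformly in $p^*$.

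The hard part will be making the pointwise-reduction claim rigorous and genuinely strict for all $p^*$ at once, rather than just for a convenient exponent. In the $p^*=\infty$ case one must track the maximum of $|\Delta_{\calP}|$ carefully, since reducing the function on the two affected intervals helps only if the global maximum is actually attained there (or can be controlled after the move); this may require choosing the relocation increment carefully and possibly iterating. I would handle this by arguing that a configuration with an empty and an overcrowded subinterval cannot be optimal because one can always perform a strictly improving local adjustment, deferring the precise optimal placement of points inside the subintervals to the author's subsequent development. Thus the lemma follows once the strict improvement under redistribution is established, and I would expect the technical crux to be verifying the sign and locality of the change in $\Delta_{\calP}$ on the two relevant subintervals.
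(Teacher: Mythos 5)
Your overall strategy---pigeonhole an empty subinterval against an overcrowded one and contradict optimality by relocating a point---is exactly the strategy of the paper's proof, but the mechanism you lean on is false, and it fails at precisely the spot where the paper's proof has to do real work. Relocating a point from the crowded interval all the way into the empty one changes $\Delta_{\calP}$ by $-\tfrac1n$ on the traversed region (your locality observation is correct), but this does \emph{not} reduce $\abs{\Delta_{\calP}}$ pointwise, nor even ``not increase it anywhere'': in the paper's notation, on a gap $[x_{m+j,1},x_{m+j+1,1}]$ between consecutive points where the running count satisfies $s_{m+j+1}=m+j+1$, the old discrepancy is $\tfrac{m+j+1}{n}-t>0$, while the new one, $\tfrac{m+j}{n}-t$, changes sign inside the gap, and for $t>\tfrac{m+j}{n}+\tfrac{1}{2n}$ one has $\bigl|\tfrac{m+j}{n}-t\bigr|>\bigl|\tfrac{m+j+1}{n}-t\bigr|$, so $\abs{\Delta}$ strictly \emph{increases} on a set of positive measure. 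Consequently monotonicity of the $L_{p^*}$-norm under pointwise domination cannot be invoked. The paper's proof confronts exactly this boundary case: it tracks the counts $s_j$, uses the choice of $m$ as the crowded interval adjacent to the run of singly-occupied intervals to guarantee $s_{m+j}\ge m+j$ (see \eqref{s-prop}), and then compares the two configurations gap by gap \emph{after integration}, via $\int_{x_{m+j,1}}^{x_{m+j+1,1}}\bigl(\tfrac{m+j+1}{n}-t\bigr)^{p^*}\rd t>\int_{x_{m+j,1}}^{x_{m+j+1,1}}\bigl|\tfrac{m+j}{n}-t\bigr|^{p^*}\rd t$, an inequality that holds because the crossing point $\tfrac{m+j}{n}$ lies inside the gap---it is true in the integral sense only, not pointwise. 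This interval-by-interval bookkeeping is the actual content of the lemma, and your proposal defers it wholesale to ``the technical crux.''

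Two further remarks. Your fallback of sliding the point only a short distance $\varepsilon<\tfrac1{2n}$ toward the empty interval can in fact be made rigorous for $p^*<\infty$: on the slid stretch just past $x_{m,k_m}$ the count is at least $m+1$ while $t<\tfrac{m}{n}+\varepsilon$, so the old $\Delta$ is at least $\tfrac1n-\varepsilon$ there, and subtracting $\tfrac1n$ then strictly decreases $\abs{\Delta}$ pointwise on a set of positive measure; a \emph{single} such slide already produces a strictly better competitor, so no iteration is needed. But this rescue fails exactly where you flag trouble, at $p^*=\infty$: a strict local decrease need not decrease the supremum, so you get only a non-strict improvement, which does not contradict least discrepancy. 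Note that the paper does not repair this inside the lemma either: its proof is given only for $p^*<\infty$, and the case $p^*=\infty$ is handled by the separate direct argument \eqref{todo} in the proof of Theorem~\ref{lem:midp}. So your deferral of $p^*=\infty$ mirrors the paper's structure, but for $p^*<\infty$ your stated mechanism must be replaced---either by the paper's integral comparison or by the small-slide argument just sketched.
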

\begin{proof} We provide a proof for $p^*<\infty$ only. The case
for $p^*=\infty$ is addressed at the end of the proof of the next theorem. 

Let $\calP=\{x_1,\dots,x_n\}$ with $0\le x_1\le \cdots\le x_n\le1$. We have 
\[\left[L_{p^*}(\calP)\right]^{p^*}\,=\,\sum_{j=1}^n e_j(\calP),
\quad\mbox{where}\quad e_j(\calP)\,=\,\int_{(j-1)/n}^{j/n}
|\Delta_{\calP}(t)|^{p^*}\rd t.
\]
To simplify the notation in this proof, we introduce
\[
  s_j\,=s_j(\calP)\,:=\,|\calP\cap[0,\tfrac{j-1}n)|\quad\mbox{for}
  \quad j\,=\,1,\dots,n. 
\]
Then the  $e_j(\calP)$'s satisfy the following properties:
\[e_j(\calP)\,=\,\int_{(j-1)/n}^{j/n}\left|\frac{s_j}n-t\right|^{p^*}\rd t
\]
if $\calP\cap\left[\tfrac{j-1}n,\tfrac{j}n\right)\,=\emptyset$,
and
\[e_j(\calP)\,=\,\int_{(j-1)/n}^{x_{j,1}}
\left|\frac{s_j}n-t\right|^{p^*}\rd t
+\int_{x_{j,1}}^{x_{j,2}}\left|\frac{s_j+1}n-t\right|^{p^*}\rd t+\cdots+
\int_{x_{j,k_j}}^{j/n}\left|\frac{s_j+k_j}n-t\right|^{p^*}\rd t,
\]
if $\calP\cap\left[\tfrac{j-1}n,\tfrac{j}n\right)=\{x_{j,1},\dots,x_{j,k_j}\}$.

Suppose, by a contradiction, that for some $\ell$, the subinterval 
$[(\ell-1)/n,\ell/n)$ does not contain any point from $\calP$.
If there are more such subintervals, then we choose $\ell$ to be
the smallest index. We now consider two cases.

{\bf Case 1:} Suppose that there is $m<\ell$ such that $[(m-1)/n,m/n)$
contains more than one point from $\calP$. Choose the largest such $m$
if there are more of such subintervals. Then
\[\left[\tfrac{m-1}n,\tfrac{m}n\right)\cap\calP\,=\,
  \{x_{m,1},\dots,x_{m,k_m}\}\quad\mbox{with}\quad
  k_m\ge2,
\]
\[\left[\tfrac{j-1}n,\tfrac{j}n\right)\cap\calP
  \,=\,\{x_{j,1}\}\quad\mbox{for}\quad
  j=m+1,\dots,\ell-1,\quad\mbox{and}\quad
  \left[\tfrac{\ell-1}n,\tfrac\ell{n}\right)\cap\calP\,=\,\emptyset. 
\]
Note that, due to $k_m\ge2$, we have 
\begin{equation}\label{s-prop}
s_{m+1}\,\ge\,m-1+k_m\,\ge\,m+1,\quad\mbox{and}\quad
  s_{m+j}\,=\,s_{m+1}+j-1\,\ge\,m+j\quad\mbox{for}\quad j\,=\,2,\dots,\ell-m.
\end{equation}

Consider next $\widetilde\calP$ which is obtained from
$\calP$ by removing the point $x_{m,k_m}$ and adding $y_m$
inside $((\ell-1)/n,\ell/n]$. Clearly $e_i(\calP)=e_i(\widetilde\calP)$
for $i<m$ and $i>\ell$. Note that
\[s_{m+j}(\widetilde\calP)\,=\,s_{m+j}(\calP)-1\quad\mbox{for}\quad
  j\,=\,1,\dots,\ell-m.
\]
Therefore, 
\begin{eqnarray*}
|L_{p^*}(\calP)|^{p^*}-|L_{p^*}(\widetilde\calP)|^{p^*}&=&
\int_{x_{m,k_m}}^{x_{m+1,1}}\left[\left|\frac{s_{m+1}}n-t\right|^{p^*}
-\left|\frac{s_{m+1}-1}n-t\right|^{p^*}\right]\rd t\\
&&+\sum_{j=1}^{\ell-m-2}\int_{x_{m+j,1}}^{x_{m+j+1,1}}\left[
\left|\frac{s_{m+j+1}}n-t\right|^{p^*}-\left|\frac{s_{m+j+1}-1}n-t\right|^{p^*}
  \right]\rd t\\
&&+\int_{x_{\ell-1,1}}^{y_m}\left[\left|\frac{s_\ell}n-t\right|^{p^*}-
\left|\frac{s_\ell-1}n-t\right|^{p^*}\right]\rd t.
\end{eqnarray*}
Due to \eqref{s-prop}, all integrals in the sum above are positive.
Indeed, if $s_{m+j+1}\ge m+j+2$ then
$s_{m+j+1}-1\ge m+j+1> n  x_{m+j+1,1}$
and
\[  \frac{s_{m+j+1}}n-t\, >\, \frac{s_{m+j+1}-1}n-t>0 \quad
\mbox{for any}\quad t\in[x_{m+j,1},x_{m+j+1,1}].
\]
If
$s_{m+j+1}=m+j+1$ then $\tfrac{s_{m+j+1}-1}n=\tfrac{m+j}n\in
[x_{m+j,1},x_{m+j+1,1}]$ and, therefore
\[\int_{x_{m+j,1}}^{x_{m+j+1,1}}\left(\frac{m+j+1}n-t\right)^{p^*}\rd t
\,>\, \int_{x_{m+j,1}}^{x_{m+j+1,1}}\left|\frac{m+j}n-t\right|^{p^*}\rd t.
\]
Hence $L_{p^*}(\calP)>L_{p^*}(\widetilde\calP)$.

{\bf Case 2:} Suppose that Case 1 is not applicable.
Let $\ell$ be as before and let  $m> \ell$ be the smallest index such that
$[(m-1)/n,m/n)$ contains more than one point of $\calP$. Let
$\calP\cap[(m-1)/n,m/n)=\{x_{m,1},\dots,x_{m,k_m}\}$ with $k_m\ge2$.
Using similar arguments as in Case 1, one can verify that
the $L_{q^*}$-discrepancy of $\calP$ is larger than the discrepancy
of $\widetilde{\calP}$, where now $\widetilde\calP$ has $x_{m,1}$
replaced by $y_{\ell,1}\in[(\ell-1)/n,\ell/n)$. 
\end{proof}

\begin{theorem}\label{lem:midp}
We have 
\begin{equation}\label{err-MP}
  {\rm error}(\QMC_{1,n}^{\rm MP};\calF_1)\,=\,
\frac{\gamma_{\{1\}}}{2\,(p^*+1)^{1/p^*}\,n}
\end{equation}
with the convention that $(p^*+1)^{1/p^*}=1$ for $p^*=\infty$.
Moreover,
\begin{equation}\label{opt-MP}
{\rm error}(\QMC_{1,n}^{\rm MP};\calF_1)\,=\,
\min\{{\rm error}(\QMC_{1,k};\calF_1)\,:\,k\le n\}.
\end{equation}
\end{theorem}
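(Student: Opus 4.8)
The plan is to reduce everything to the one-dimensional $L_{p^*}$-discrepancy via the preceding corollary, which states that ${\rm error}(\QMC_{1,n};\calF_1)=\gamma_{\{1\}}\,L_{p^*}(\calP)$. Thus \eqref{err-MP} amounts to an explicit computation of $L_{p^*}$ of the midpoint set $\{y_j\}$ from \eqref{def:xMPR}, while \eqref{opt-MP} amounts to showing that this set minimizes the $L_{p^*}$-discrepancy among all point sets of cardinality at most $n$.

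For \eqref{err-MP} I would first describe $\Delta_{\calP}$ explicitly. Since $y_j=(2j-1)/(2n)$ sits at the center of $[(j-1)/n,j/n)$, on the left half of this subinterval one has $\Delta_{\calP}(t)=(j-1)/n-t$ and on the right half $\Delta_{\calP}(t)=j/n-t$; in either case $|\Delta_{\calP}|$ is a sawtooth rising linearly from $0$ to $1/(2n)$. Summing the $n$ identical contributions,
\begin{equation*}
[L_{p^*}(\calP)]^{p^*}=n\cdot 2\int_0^{1/(2n)}s^{p^*}\rd s
=\frac{2n}{(p^*+1)(2n)^{p^*+1}}=\frac{1}{(p^*+1)(2n)^{p^*}},
\end{equation*}
and taking the $p^*$-th root yields $L_{p^*}(\calP)=1/(2n(p^*+1)^{1/p^*})$, which gives \eqref{err-MP} after multiplying by $\gamma_{\{1\}}$. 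For $p^*=\infty$ the same picture gives $\|\Delta_{\calP}\|_{L_\infty}=1/(2n)$, matching the stated convention.

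For the optimality \eqref{opt-MP} I would invoke Lemma~\ref{lem:simp}: a minimizing $n$-point set has exactly one point $x_j$ in each subinterval $[(j-1)/n,j/n)$, so that $s_j=j-1$ and
\begin{equation*}
[L_{p^*}(\calP)]^{p^*}=\sum_{j=1}^n e_j,\qquad
e_j=\frac{1}{p^*+1}\Big(a_j^{\,p^*+1}+\big(\tfrac1n-a_j\big)^{p^*+1}\Big),
\end{equation*}
where $a_j=x_j-(j-1)/n\in[0,1/n)$ is the position of $x_j$ inside its own subinterval. Crucially each $e_j$ depends only on its own $a_j$, so the sum is minimized termwise; since $s\mapsto s^{p^*+1}$ is strictly convex for $p^*\ge1$, each term is minimized uniquely at $a_j=1/(2n)$, i.e.\ exactly at the midpoint. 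This identifies the midpoint set as the optimizer and shows, by the same argument with $n$ replaced by $k$, that the best $k$-point rule has error $\gamma_{\{1\}}/(2k(p^*+1)^{1/p^*})$. As this is strictly decreasing in $k$, the minimum over $k\le n$ is attained at $k=n$, which is \eqref{opt-MP}.

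The computations are elementary once Lemma~\ref{lem:simp} is in hand; the only real subtlety, and the step I would treat most carefully, is the $p^*=\infty$ case, where convexity is replaced by minimizing $\max(a_j,1/n-a_j)$ over each subinterval (again uniquely at $a_j=1/(2n)$), combined with the fact that the $L_\infty$-discrepancy is the maximum rather than the sum of the per-subinterval contributions, so that all subintervals must be optimized simultaneously—which the midpoint set does.
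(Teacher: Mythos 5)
Your proof is correct, and for the optimality claim \eqref{opt-MP} it takes a genuinely different route from the paper. The computation of \eqref{err-MP} is essentially identical to the paper's (the same sawtooth integrals, summed over the $n$ subintervals). For \eqref{opt-MP} with $p^*<\infty$, however, the paper argues variationally: it writes the $p^*$-th power of the error as $E(x_1,\dots,x_n)=\gamma_{\{1\}}^{p^*}\sum_{\ell=0}^n\int_{x_\ell}^{x_{\ell+1}}|\ell/n-t|^{p^*}\rd t$, computes $\partial E/\partial x_k$, shows the only interior critical point is $x_k=(2k-1)/(2n)$, and then uses Lemma~\ref{lem:simp} only to exclude degenerate configurations ($x_i=x_{i+1}$, boundary points). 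You instead use Lemma~\ref{lem:simp} up front to reduce to one point per subinterval, observe that the contribution $e_j=\frac{1}{p^*+1}\bigl(a_j^{p^*+1}+(\tfrac1n-a_j)^{p^*+1}\bigr)$ decouples across subintervals, and minimize termwise by strict convexity. This is cleaner than the calculus argument: it avoids discussing critical points versus minima, immediately yields uniqueness of the optimizer, and you also make explicit the monotonicity-in-$k$ step needed for the minimum over $k\le n$, which the paper leaves implicit.

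One caveat concerns $p^*=\infty$. You invoke Lemma~\ref{lem:simp} there too, but the paper proves that lemma only for $p^*<\infty$ and explicitly defers the $p^*=\infty$ case to the proof of Theorem~\ref{lem:midp} itself, where it is handled by an entirely different telescoping argument ($1=x_1+\sum(x_{\ell+1}-x_\ell)+1-x_n<1$, a contradiction if all deviations were below $1/(2n)$). So as written, your $p^*=\infty$ argument rests on an instance of the lemma whose proof in the paper is derived from the theorem, a potential circularity. The gap is easily closed without the lemma: if some subinterval $[\tfrac{\ell-1}{n},\tfrac{\ell}{n})$ contains no point, then on it $\Delta_{\calP}(t)=s/n-t$ with integer $s$ and $t$ ranging over an interval of length $1/n$, so $\sup|\Delta_{\calP}|\ge 1/(2n)$ with equality impossible since $s=\ell-\tfrac12$ is not an integer; this forces $L_\infty(\calP)>1/(2n)$ for any such set, after which your per-subinterval minimization of $\max(a_j,\tfrac1n-a_j)$ finishes the case. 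With that two-line fix your proof is complete and self-contained.
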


\begin{proof}
We begin with $p^*<\infty$. Let $y_1,\ldots,y_n$ be the nodes of the midpoint rule given by \eqref{def:xMPR}. From \eqref{eq:4} we have
\begin{eqnarray*}
{\rm error}(\QMC_{1,n}^{\rm MP};\calF_1) &=& \gamma_{\{1\}}\,\bigg[
\int_0^1\left|\frac{|\{k\in\{1,\ldots,n\}\,:\,y_k<t\}|}n-t\right|^{p^*}
    \rd t\bigg]^{1/p^*}\\
  &=&\gamma_{\{1\}}\bigg[\sum_{j=0}^n\int_{y_j}^{y_{j+1}}
   \left|\frac{|\{k\in\{1,\ldots,n\}\,:\,y_k<t\}|}n-t\right|^{p^*}\rd t\bigg]^{1/p^*},
\end{eqnarray*}
where we put $y_0=0$ and $y_{n+1}=1$. The first and last integrals in the sum
above are  equal to
\[\int_0^{y_1}|0-t|^{p^*}\rd t=\int_{y_n}^1|1-t|^{p^*}\rd t=
 \frac1{(p^*+1)\,(2\,n)^{p^*+1}}.
\]
The other integrals are equal to 
\[\int_{y_j}^{y_{j+1}}\left|\frac{j}n-t\right|^{p^*}\rd t=
2\,\int_{j/n}^{(2j+1)/(2n)}\left(t-\frac{j}n\right)^{p^*}\rd t
  = \frac{2}{(p^*+1)\,(2\,n)^{p^*+1}}.
\]
This proves that the sum of the integrals is equal to 
\[\sum_{j=0}^n\int_{y_j}^{y_{j+1}}
\left|\frac{|\{k\in\{1,\ldots,n\}\,:\,y_k<t\}|}n-t\right|^{p^*}\rd t
=\frac1{(p^*+1)\,(2\,n)^{p^*}},
\]
which completes the proof for $p^*<\infty$.

For $p^*=\infty$, we have
\begin{eqnarray*}
{\rm error}(\QMC_{1,n}^{\rm MP};\calF_1)
&=& \gamma_{\{1\}}\,
\sup_{t\in[0,1]}\left|\frac{|\{j\in\{1,\ldots,n\}\,:\,y_j<t\}|}n-t\right|\\
&=&\gamma_{\{1\}}\max_{j=0,\dots,n}\,\max_{t\in[y_j,y_{j+1}]}\left|
\frac{j}n-t\right|=\frac{\gamma_{\{1\}}}{2\,n}.
\end{eqnarray*}
This completes the proof of \eqref{err-MP}.

To show \eqref{opt-MP}, consider a general point set $\calP
=\{x_1,\dots,x_n\}$,
\[
0=:x_0\le x_1\le\cdots\le x_n\le x_{n+1}:=1.
\]
For $p^*<\infty$, the worst case error of the corresponding QMC rule $\QMC_{1,n}$
raised to the power $p^*$ is equal to 
\begin{equation}\label{eq-cos}
  E(x_1,\dots,x_n):=
  \gamma_{\{1\}}^{p^*}\,\sum_{\ell=0}^n\int_{x_\ell}^{x_{\ell+1}}
\left|\frac\ell{n}-t\right|^{p^*} \rd t.
\end{equation}
Its partial derivative with respect to $x_k$ is
\[
\gamma_{\{1\}}^{p^*}\,\left(\left|\frac{k-1}n-x_k\right|^{p^*}-
\left|\frac{k}n-x_k\right|^{p^*}\right),
\]
which is zero if and only if 
\[\frac{k-1}n-x_k=-\left(\frac{k}n-x_k\right),\quad\mbox{i.e., iff }\ 
x_k=\frac{2k-1}{2n}. 
\]
This means that the only possible extremal point is given by
$x_k=\frac{2k-1}{2n}$ for $1\le k\le n$.
It is easy to see that the minimum of $E$ is not attained if $x_1=0$ and/or
$x_n=1$. Due to Lemma \ref{lem:simp} it is not attained if $x_i=x_{i+1}$
for some $i$, which completes the proof of \eqref{opt-MP} for $p^*<\infty$.

For $p^*=\infty$, we need to show
\begin{equation}\label{todo}
  \gamma_{\{1\}}\,\max_{\ell=0,\dots,n}\ \max_{t\in[x_\ell,x_{\ell+1}]}
  \left|\frac{\ell}n-t\right|\,\ge\,\frac{\gamma_{\{1\}}}{2\,n}.
\end{equation}
To prove \eqref{todo}, suppose by contrary that for some point set
$\calP=\{x_1,\dots,x_n\}$ it holds that
\[\max_{\ell=0,\dots,n}\ \max_{t\in[x_\ell,x_{\ell+1}]}\left|\frac\ell{n}-t\right|
  \,<\,\frac1{2\,n}.
\]
Then $|0-x_1|<1/(2n)$ and $|1-x_n|<1/(2n)$.
Moreover, 
\[\left|\frac\ell{n}-x_\ell\right|\,<\,\frac1{2\,n}\quad\mbox{and}\quad
  \left|x_{\ell+1}-\frac\ell{n}\right|\,<\,\frac1{2\,n}, 
\]
which implies that
\[x_{\ell+1}-x_\ell\,\le\, |\ell/n-x_\ell|+|x_{\ell+1}-\ell/n|\,<\,1/n
  \quad\mbox{for\ }\ell\,=1,\dots,n-1.
\]
Therefore
\[ 1\,=\,x_1+(x_2-x_1)+\cdots+(x_n-x_{n-1})+1-x_n\,<\,
  \frac1{2\,n}+\frac{n-1}{n}+\frac1{2\,n}\,=\,1,
\]
which is a contradiction. 

This completes the proof of \eqref{todo} and of the theorem.
\end{proof}

\begin{remark}\label{rem10}\rm
From \cite[Theorem~8]{KPW2016} it follows that for $p=2$ and
$\gamma_\emptyset=1$ the norms of the corresponding embeddings are equal and 
\[\|\imath\|\,=\,\|\imath^{-1}\|\,=\,\left(1+\frac{\gamma_{\{1\}}}{\sqrt{3}}\,
\left(\sqrt{1+\frac{\gamma_{\{1\}}^2}{12}}+
\frac{\gamma_{\{1\}}}{\sqrt{12}}\right)\right)^{1/2},
\]
which could be large. For instance for $\gamma_{\{1\}}=1,2,3$, these norms 
are equal to $1.329\dots,1.732\dots,$ and $2.188\dots$, respectively. 
Hence, using the embedding approach we would get 
$\|\imath\| \gamma_{\{1\}}/(2\sqrt{3} n)$
as an upper bound for the error of $\QMC^{\rm MP}_{1,n}$.
Proceeding directly as in  Theorem~\ref{lem:midp}, however, we get the
exact value of the error of the midpoint rule which is  
$\gamma_{\{1\}}/(2\sqrt{3} n)$.
\end{remark}

\subsection{The case 2D}

Now we consider the two-dimensional case and show that here the lower bound in Theorem~\ref{thm:lbddimgen} is best 
possible with respect to the order of magnitude in $n$. In the following we assume that the two-dimensional point sets $\calP=\{(x_j,y_j)\ : \ j=1,\ldots,n\}$ under consideration are projection regular in the sense that
\begin{equation}\label{projreg}
\{x_j \ : \ j=1,\ldots,n\}=\{y_j \ : \ j=1,\ldots,n\}=\{j/n\ : \ j=0,\ldots,n -1\}.  
\end{equation}

We will prove the following result:

\begin{theorem}\label{thm2}
Let $\QMC_{2,n}$ be the
QMC rule based on a two-dimensional point set $\calP$ that satisfies projection regularity \eqref{projreg}. Then we have
\begin{eqnarray*}
{\rm error}(\QMC_{2,n};\calF_2) & \le & \frac{1}{n}
  \left[\frac{\gamma_{\{1\}}^{p^*}+\gamma_{\{2\}}^{p^*}}{p^\ast +1}  + 3^{p^\ast -1} \gamma_{\{1,2\}}^{p^*} \left(\frac{2}{(p^\ast +1)^2} + (n L_{p^\ast}(\calP))^{p^\ast}\right) \right]^{1/p^*}. 
\end{eqnarray*}
On  the other hand, there exists a positive number
$C=C(p^*)$ such that  
\begin{eqnarray*}
  {\rm error}(\QMC_{2,n};\calF_2) & \ge & C \,
  \gamma_{\{1,2\}} \, L_{p^\ast}(\calP).
\end{eqnarray*}  
\end{theorem}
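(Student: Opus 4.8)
The plan is to specialize the exact error formula to $d=2$ and then treat the two inequalities separately, exploiting the product structure of the three surviving terms together with the projection regularity \eqref{projreg}. First I would start from \eqref{prcor1:eq1} (equivalently \eqref{eq:lbd_formula}) with $d=2$. The only nonempty subsets are $\{1\},\{2\},\{1,2\}$, so the formula collapses to
\[
{\rm error}(\QMC_{2,n};\calF_2)^{p^\ast}=\gamma_{\{1\}}^{p^\ast}(L_{p^\ast}(\calP_{\{1\}}))^{p^\ast}+\gamma_{\{2\}}^{p^\ast}(L_{p^\ast}(\calP_{\{2\}}))^{p^\ast}+\gamma_{\{1,2\}}^{p^\ast}\int_{[0,1]^2}|\Delta_{\calP}(\bst)-g(\bst)|^{p^\ast}\rd\bst,
\]
where $g(t_1,t_2)=t_1\Delta_{\calP_{\{2\}}}(t_2)+t_2\Delta_{\calP_{\{1\}}}(t_1)$. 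The key preliminary computation is that, by \eqref{projreg}, each one-dimensional projection is the equispaced set $\{j/n:j=0,\dots,n-1\}$, whose local discrepancy satisfies $(L_{p^\ast}(\calP_{\{i\}}))^{p^\ast}=1/((p^\ast+1)\,n^{p^\ast})$; this is an elementary integral in the spirit of the computation in the proof of Theorem~\ref{lem:midp}. Substituting makes the first two terms exactly $(\gamma_{\{1\}}^{p^\ast}+\gamma_{\{2\}}^{p^\ast})/((p^\ast+1)\,n^{p^\ast})$.

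For the upper bound I would bound the integrand by the triangle inequality, $|\Delta_{\calP}-g|\le|\Delta_{\calP}|+|t_1\Delta_{\calP_{\{2\}}}|+|t_2\Delta_{\calP_{\{1\}}}|$, and then apply Lemma~\ref{le:easy} with $\ell=3$ to produce the factor $3^{p^\ast-1}$ in front of $|\Delta_{\calP}|^{p^\ast}+|t_1\Delta_{\calP_{\{2\}}}|^{p^\ast}+|t_2\Delta_{\calP_{\{1\}}}|^{p^\ast}$. Integrating term by term, Fubini separates each mixed term into $\int_0^1 t^{p^\ast}\rd t=1/(p^\ast+1)$ times a projected discrepancy, so each of the two mixed contributions equals $1/((p^\ast+1)^2 n^{p^\ast})$, while the $\Delta_{\calP}$ term gives $(L_{p^\ast}(\calP))^{p^\ast}$. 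Collecting everything and factoring out $1/n^{p^\ast}$ yields the stated upper bound after taking $p^\ast$-th roots.

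For the lower bound I would discard the two nonnegative singleton terms to obtain ${\rm error}(\QMC_{2,n};\calF_2)\ge\gamma_{\{1,2\}}\|\Delta_{\calP}-g\|_{L_{p^\ast}}$, then apply the reverse triangle inequality, $\|\Delta_{\calP}-g\|_{L_{p^\ast}}\ge L_{p^\ast}(\calP)-\|g\|_{L_{p^\ast}}$, and bound $\|g\|_{L_{p^\ast}}\le (p^\ast+1)^{-1/p^\ast}(L_{p^\ast}(\calP_{\{1\}})+L_{p^\ast}(\calP_{\{2\}}))=O(1/n)$ using the projected discrepancies computed above. The main obstacle is that this alone does not dominate $\|g\|_{L_{p^\ast}}$ by a fixed fraction of $L_{p^\ast}(\calP)$. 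I would close the gap by invoking the classical lower bound of Roth and its extensions (the bound used in the proof of Theorem~\ref{thm:lbddimgen}), namely $L_{p^\ast}(\calP)\ge C(p^\ast)\sqrt{\ln n}/n$, which grows faster than $\|g\|_{L_{p^\ast}}$; hence for all $n$ beyond some threshold one has $\|g\|_{L_{p^\ast}}\le\tfrac12 L_{p^\ast}(\calP)$ and therefore ${\rm error}(\QMC_{2,n};\calF_2)\ge\tfrac12\gamma_{\{1,2\}}L_{p^\ast}(\calP)$. The finitely many remaining small $n$ are handled by observing that, for each such $n$, \eqref{projreg} leaves only finitely many admissible point sets, each with $\|\Delta_{\calP}-g\|_{L_{p^\ast}}>0$ and $L_{p^\ast}(\calP)>0$, so the ratio has a positive minimum that can be absorbed into $C(p^\ast)$.
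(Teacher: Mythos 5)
Your proposal is correct and follows essentially the same route as the paper's proof: you specialize the exact error formula of Theorem~\ref{thm1} to $d=2$ (your $g$ is $\tfrac1n$ times the paper's, since by Lemma~\ref{le1} one has $n\Delta_{\calP_{\{i\}}}(t)=\lceil nt\rceil-nt$, and the one-dimensional computation $\int_0^1(\lceil nt\rceil-nt)^{p^\ast}\rd t=1/(p^\ast+1)$ is exactly the paper's), apply Lemma~\ref{le:easy} with $\ell=3$ for the upper bound, and use the reverse triangle inequality together with the Roth--Schmidt bound $L_{p^\ast}(\calP)\ge C(p^\ast)\sqrt{\ln n}/n$ for the lower bound. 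Your explicit handling of the finitely many small $n$ (finitely many projection-regular sets per $n$, each with $\|\Delta_{\calP}-g\|_{L_{p^\ast}}>0$) is a minor refinement of the paper's ``for $n$ large enough'' conclusion, not a different argument.
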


For the proof we need the following easy lemma:

\begin{lemma}\label{le1}
If $\calP=\{(x_j,y_j)\ : \ j=1,\ldots,n\}$ satisfies \eqref{projreg},
we have for $t \in [0,1]$ that
\[\sum_{j=1}^n 1_{[0,t)}(x_j)= \sum_{j=1}^n 1_{[0,t)}(y_j)
=\lceil n t\rceil.
\]
\end{lemma}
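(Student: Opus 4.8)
The final statement to prove is Lemma~\ref{le1}, a simple counting lemma about projection-regular point sets.

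The plan is to prove this directly from the definition of projection regularity~\eqref{projreg}. First I would observe that by symmetry it suffices to establish the claim for the $x$-coordinates, since projection regularity asserts that the $x$-coordinate set and the $y$-coordinate set are both exactly $\{j/n : j = 0,\ldots,n-1\}$, so the two sums are identical functions of $t$. Thus I only need to count how many of the values $0/n, 1/n, \ldots, (n-1)/n$ lie strictly below a given threshold $t \in [0,1]$.

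The key computation is then to evaluate $|\{j \in \{0,\ldots,n-1\} : j/n < t\}|$. The condition $j/n < t$ is equivalent to $j < nt$. Since $j$ ranges over nonnegative integers, the number of such $j$ is precisely the number of integers in $\{0,1,2,\ldots\}$ that are strictly less than $nt$, which equals $\lceil nt \rceil$. (For instance, if $nt$ is an integer, the integers $0,\ldots,nt-1$ qualify, giving $nt = \lceil nt\rceil$ values; if $nt$ is not an integer, the qualifying integers are $0,\ldots,\lfloor nt\rfloor$, giving $\lfloor nt\rfloor + 1 = \lceil nt\rceil$ values.) I would present this as a short case distinction or simply invoke the standard identity that the count of integers in $[0, s)$ equals $\lceil s \rceil$ for $s \geq 0$.

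There is no real obstacle here; the only point requiring a small amount of care is the boundary behavior at $t = 1$, where $\lceil n \rceil = n$ correctly counts all $n$ points, and the use of the half-open interval $[0,t)$ in the indicator, which is what makes the strict inequality $j/n < t$ the relevant condition and produces a ceiling rather than a floor. I would keep the proof to just a few lines, noting the reduction by symmetry and then the elementary count.
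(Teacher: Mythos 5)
Your proof is correct and matches the paper's argument exactly: the paper likewise reduces to counting the integers $j\in\{0,\ldots,n-1\}$ with $j<nt$ and identifies this count as $\lceil nt\rceil$, with the same symmetry reduction between the $x$- and $y$-coordinates implicit in \eqref{projreg}. Your explicit case distinction on whether $nt$ is an integer is a slightly more careful spelling-out of the step the paper compresses into a single chain of equalities, but it is the same proof.
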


\begin{proof}
Since $\calP$ satisfies \eqref{projreg} we obtain   
$$ \sum_{j=1}^n 1_{[0,t)}(x_j) = \sum_{j=0\atop j < nt}^{n-1} 1= \sum_{j=0}^{\lceil n t\rceil -1} 1=\lceil n t\rceil.$$
\end{proof}

\begin{proof}[Proof of Theorem~\ref{thm2}]
First we show the upper bound. We need to study 
\begin{equation}\label{tostudyd2}
\sum_{j=1}^n \prod_{i \in \setu}(t_i-1_{[0,t_i)}(x_{j,i}))
\end{equation}
for $\setu=\{1\}$, $\setu=\{2\}$, and $\setu=\{1,2\}$,
where $x_{j,1}=x_j$ and $x_{j,2}=y_j$.
\begin{itemize}
\item $\setu=\{1\}$: According to Lemma~\ref{le1},
Eq.~\eqref{tostudyd2} is
\[\sum_{j=1}^n (t_1-1_{[0,t_1)}(x_j))=n t_1 - \lceil n t_1\rceil
\]
and hence
\[\left|\sum_{j=1}^n (t_1-1_{[0,t_1)}(x_j))\right|= \lceil n
t_1\rceil -n t_1.\]
This implies that
\begin{eqnarray*}
  \int_0^1 \left|\sum_{j=1}^n (t_1-1_{[0,t_1)}(x_j))\right|^{p^\ast} \rd t_1 & = & \int_0^1 (\lceil n t_1\rceil -n t_1)^{p^\ast} \rd t_1\\
  & = & \sum_{k=1}^{n} \int_{\frac{k-1}{n}}^{\frac{k}{n}} (k-n t_1)^{p^\ast} \rd t_1\\
& = & \sum_{k=1}^n \frac{1}{n} \int_0^1 y^{p^\ast} \rd y\\
  & = & \frac{1}{p^\ast +1},
 \end{eqnarray*}
where we used the substitution $y=k-n t_1$.

\item $\setu=\{2\}$: In this case, according to Lemma~\ref{le1},
 Eq.~\eqref{tostudyd2} is
\[\sum_{j=1}^n (t_2-1_{[0,t_2)}(y_j))=n t_2 - \lceil n t_2\rceil\]
and hence, as above,
\[\int_0^1 \left|\sum_{j=1}^n (t_2-1_{[0,t_2)}(y_j))\right|^{p^\ast}\rd t_2
  =  \frac{1}{p^\ast +1}.
\]

\item $\setu=\{1,2\}$: Here, again according to Lemma~\ref{le1}, Eq.~\eqref{tostudyd2} is
 \begin{eqnarray*}
 \lefteqn{\sum_{j=1}^n(t_1-1_{[0,t_1)}(x_j)) (t_2-1_{[0,t_2)}(y_j))}\\
  & = & n t_1 t_2 -t_2 \lceil n t_1\rceil - t_1 \lceil n t_2 \rceil + \sum_{j=1}^n 1_{[0,t_1)}(x_j) 1_{[0,t_2)}(y_j)\\
  & = & 2 n t_1 t_2 -t_2 \lceil n t_1\rceil - t_1 \lceil n t_2 \rceil + \left(\sum_{j=1}^n 1_{[0,t_1)}(x_j) 1_{[0,t_2)}(y_j) - n t_1 t_2\right)\\
  & = & 2 n t_1 t_2 -t_2 \lceil n t_1\rceil - t_1 \lceil n t_2 \rceil + n \Delta_{\calP}(t_1,t_2).
 \end{eqnarray*}

Taking the absolute value and the $p^\ast$-th power we obtain with Lemma~\ref{le:easy} that
 \begin{eqnarray*}
 \lefteqn{\left|\sum_{j=1}^n(t_1-1_{[0,t_1)}(x_j)) (t_2-1_{[0,t_2)}(y_j))\right|^{p^\ast}}\\
  & \le & 3^{p^\ast-1} \left(t_1^{p^\ast}( \lceil n t_2\rceil- n t_2)^{p^\ast} + t_2^{p^\ast}( \lceil n t_1\rceil- n t_1)^{p^\ast} + (n |\Delta_{\calP}(t_1,t_2)|)^{p^\ast}\right).
\end{eqnarray*}
Now we integrate with respect to $(t_1,t_2) \in [0,1]^2$ and obtain
\begin{eqnarray*}
\int_{[0,1]^2}  \left|\sum_{j=1}^n(t_1-1_{[0,t_1)}(x_j)) (t_2-1_{[0,t_2)}(y_j))\right|^{p^\ast} \rd (t_1,t_2)
  & \le & 3^{p^\ast -1} \left(\frac{2}{(p^\ast +1)^2} + (n L_{p^\ast}(\calP))^{p^\ast}\right).
\end{eqnarray*}
\end{itemize}
 
From the error formula in Theorem~\ref{thm1} we obtain
\begin{eqnarray*}
{\rm error}(\QMC_{2,n};\calF_2) \le  \frac{1}{n}
  \left[\frac{\gamma_{\{1\}}^{p^*}+\gamma_{\{2\}}^{p^*}}{p^\ast +1} + 3^{p^\ast -1} \gamma_{\{1,2\}}^{p^*} \left(\frac{2}{(p^\ast +1)^2} + (n L_{p^\ast}(\calP))^{p^\ast}\right) \right]^{1/p^*}. 
\end{eqnarray*}
This proves the upper bound. 

Now we turn our attention to the lower bound. From the proof of Theorem~\ref{thm:lbddimgen} we know that  
\begin{eqnarray*}
\sum_{j=1}^n(t_1-1_{[0,t_1)}(x_j)) (t_2-1_{[0,t_2)}(y_j)) =  n \Delta_{\calP}(t_1,t_2)- g(t_1,t_2),
\end{eqnarray*}
where $$g(t_1,t_2):= -2 n t_1 t_2 +t_2 \lceil n t_1\rceil + t_1 \lceil n t_2 \rceil.$$

This yields, using again Theorem \ref{thm1},
\begin{eqnarray*}
{\rm error}(\QMC_{2,n};\calF_2) & \ge & \frac{1}{n}
  \gamma_{\{1,2\}}\,\left[
  \int_{[0,1]^{2}}\bigg|n \Delta_{\calP}(t_1,t_2)- g(t_1,t_2)
      \bigg|^{p^*}\rd(t_1,t_2)\right]^{1/p^*}\\
      & = & \frac{1}{n} \gamma_{\{1,2\}} \left\| n \Delta_{\calP}-g\right\|_{L_{p^\ast}}\\
      & \ge & \frac{1}{n} \gamma_{\{1,2\}} \left| \left\|n \Delta_{\calP}\right\|_{L_{p^\ast}} - \left\|g\right\|_{L_{p^\ast}}\right|\\
      & = & \frac{1}{n} \gamma_{\{1,2\}} \left| n L_{p^\ast}(\calP) - \left\|g\right\|_{L_{p^\ast}}\right|\\
      & = & \gamma_{\{1,2\}} \left|L_{p^\ast}(\calP) - \frac{1}{n}\left\|g\right\|_{L_{p^\ast}} \right|.
\end{eqnarray*}      
With the same methods as in the proof of the upper bound we can show that $$\left\|g\right\|_{L_{p^\ast}} \le \frac{2^{p^*+1}}{(p^* +1)^2}.$$ On the other hand, we know from \eqref{lbd:Lpdisc} that there exists an absolute constant $C>0$ such that $$L_{p^\ast}(\calP) \ge C \frac{\sqrt{\ln n}}{n}.$$ Hence we have 
\begin{eqnarray*}
{\rm error}(\QMC_{2,n};\calF_2) & \ge &   \gamma_{\{1,2\}}\left(L_{p^\ast}(\calP) - \frac{2^{p^*+1}}{(p^* +1)^2} \frac{1}{n}\right)\\
& \ge & \gamma_{\{1,2\}}L_{p^\ast}(\calP) \left(1 - \frac{2^{p^*+1}}{(p^* +1)^2} \frac{1}{C \cdot \sqrt{\ln n}}\right).
\end{eqnarray*}
Now, for $n$ large enough we have $$1 - \frac{2^{p^*+1}}{(p^* +1)^2} \frac{1}{C \cdot \sqrt{\ln n}}>0$$ and hence the result follows. 
\end{proof}

Several constructions of two-dimensional projection regular point sets with best possible order of $L_{p^\ast}$-discrepancy for all $p^\ast \in [1,\infty]$ are known, 
e.g., generalized Hammersley point sets \cite{FP2009}, shifted Hammersley point sets \cite{HKP2015,M2013} or digital NUT nets \cite{KP19}. 
As an example we would like to present the digitally shifted Hammersley point sets from \cite{HKP2015} in greater detail:

\begin{example}\label{ex:Hampts}\rm
Let $\bssigma=(\sigma_1,\sigma_2,\ldots,\sigma_{m}) \in \{0,1\}^m$. The two-dimensional digitally shifted Hammersley point set is given by
$$
 {\cal R}_{m,\bssigma} = \Big\{ \Big( \frac{t_m}{2}+\frac{t_{m-1}}{2^2}+\cdots + \frac{t_1}{2^m} ,  
                       \frac{t_{1} \oplus \sigma_1}{2}+\frac{t_{2}\oplus \sigma_2}{2^2}  +\cdots + \frac{t_m \oplus \sigma_m}{2^m} \Big) \ : \ 
                       t_1,\ldots,t_m \in \{0,1\}   \Big\}, 
$$
where $t \oplus \sigma =t+\sigma \pmod{2}$ for $t,\sigma \in \{0,1\}$. 
This point set contains $n=2^m$ elements. If $\bssigma=\bszero=(0,0,\ldots,0)$, we obtain the classical two-dimensional Hammersley point set. 
From \cite[Theorem~1]{HKP2015} we obtain that if $|\{j \ : \ \sigma_j=0\}|= \lfloor m/2\rfloor$, then for $p^\ast\in [1,\infty)$ 
we have 
$$
L_{p^\ast}({\cal R}_{m,\bssigma}) \asymp \frac{\sqrt{m}}{2^m} \asymp \frac{\sqrt{\ln n}}{n}.
$$ 
Furthermore, according to \cite{F2008,KLP2007}, 
$$
L_{\infty}({\cal R}_{m,\bssigma}) \asymp \frac{m}{2^m} \asymp \frac{\ln n}{n}.
$$
Since the point sets ${\cal R}_{m,\bssigma}$ are projection regular we obtain 
$$
{\rm error}(\QMC_{2,n};\calF_2) \asymp \left\{ 
\begin{array}{ll}
\frac{\sqrt{\ln n}}{n} & \mbox{if $p>1$,}\\[0.5em]
\frac{\ln n}{n} & \mbox{if $p=1$,}
\end{array}\right.
$$
where $n=2^m$, and these orders of magnitude are optimal according to Theorem~\ref{thm:lbddimgen}. We remark that for the classical two-dimensional Hammersley point set (where $\bssigma=\bszero$) we only get  $${\rm error}(\QMC_{2,n};\calF_2) \asymp \frac{\ln n}{n}$$ for all $p \in [1,\infty]$. This is optimal only for $p=1$ (i.e., $p^\ast=\infty$).
\end{example}

\section*{Acknowledgements}

The authors would like to thank Christoph Koutschan (RICAM, Austria) for helpful comments regarding the proof of Lemma \ref{lem:T_ell}.

\begin{small}
\noindent\textbf{Authors' addresses:}

\medskip
\noindent Peter Kritzer\\
Johann Radon Institute for Computational and Applied Mathematics (RICAM)\\
Austrian Academy of Sciences\\
Altenbergerstr.~69, 4040 Linz, Austria\\
E-mail: \texttt{peter.kritzer@oeaw.ac.at}

\medskip

\noindent Friedrich Pillichshammer\\
Institut f\"{u}r Finanzmathematik und Angewandte Zahlentheorie\\
Johannes Kepler Universit\"{a}t Linz\\
Altenbergerstr.~69, 4040 Linz, Austria\\
E-mail: \texttt{friedrich.pillichshammer@jku.at}

\medskip

\noindent G. W. Wasilkowski\\
Computer Science Department, University of Kentucky\\
301 David Marksbury Building\\
329 Rose Street\\
Lexington, KY 40506, USA\\
E-mail: \texttt{greg@cs.uky.edu}
\end{small}

\end{document}